\newtheorem{definition}{Definition}[section]
\newtheorem{proposition}[definition]{Proposition}
\newtheorem{theorem}[definition]{Theorem}
\newtheorem{lemma}[definition]{Lemma}
\newtheorem{remark}{Remark}[section]
\title[Semilinear damped wave equation]{
Critical exponent for the semilinear wave equations with a
damping increasing in the far field
}
\author[K. Nishihara]{Kenji Nishihara}
\address[K. Nishihara]{
Professor Emeritus, Waseda University, Tokyo 169-8050, Japan}
\email{kenji@waseda.jp}
\author[M. Sobajima]{Motohiro Sobajima}
\address[M. Sobajima]{
Department of Mathematics, Faculty of Science and Technology,
Tokyo University of Science, 2641 Yamazaki, Noda-shi, Chiba, 278-8510, Japan}
\email{msobajima1984@gmail.com}
\author[Y. Wakasugi]{Yuta Wakasugi}
\address[Y. Wakasugi]{
Department of Engineering for Production and Environment,
Graduate School of Science and Engineering,
Ehime University,
3 Bunkyo-cho, Matsuyama, Ehime, 790-8577, Japan
}
\email{wakasugi.yuta.vi@ehime-u.ac.jp}
\begin{document}
\begin{abstract}
We consider the Cauchy problem of the semilinear
wave equation with a damping term
\begin{align*}
	\left\{ \begin{array}{ll}
	u_{tt} - \Delta u + c(t,x) u_t = |u|^p,&(t,x)\in (0,\infty)\times \mathbb{R}^N,\\
	u(0,x) = \varepsilon u_0(x), \quad u_t(0,x) = \varepsilon u_1(x),& x\in \mathbb{R}^N,
	\end{array}\right.
\end{align*}
where
$p>1$
and the coefficient of the damping term has the form
\begin{align*}
	c(t,x) = a_0 (1+|x|^2)^{-\alpha/2} (1+t)^{-\beta}
\end{align*}
with some
$a_0 > 0$, $\alpha < 0$, $\beta \in (-1, 1]$.
In particular, we mainly consider the cases
\begin{align*}
	\alpha < 0, \beta =0 \quad \mbox{or} \quad \alpha < 0, \beta = 1,
\end{align*}
which imply
$\alpha + \beta < 1$,
namely, the damping is spatially increasing and effective. 
Our aim is to prove that the critical exponent is given by
\begin{align*}
	p = 1+ \frac{2}{N-\alpha}.
\end{align*}
This shows that the critical exponent is the same as that of the
corresponding parabolic equation
\begin{align*}
	c(t,x) v_t - \Delta v = |v|^p.
\end{align*}
The global existence part is proved by
a weighted energy estimates with an exponential-type weight function
and a special case of the Caffarelli-Kohn-Nirenberg inequality.
The blow-up part is proved by
a test-function method
introduced by Ikeda and Sobajima \cite{IkeSopre1}.
We also give an upper estimate of the lifespan.
\end{abstract}
\keywords{semilinear damped wave equation;
time and space dependent damping;
critical exponent;
lifespan}

\maketitle
\section{Introduction}
\footnote[0]{2010 Mathematics Subject Classification. 35L15; 35A01, 35B44}

We consider the Cauchy problem of the semilinear
wave equation with a damping term
\begin{align}
\label{dw}
	\left\{ \begin{array}{ll}
	u_{tt} - \Delta u + c(t,x) u_t = |u|^p,&(t,x)\in (0,\infty)\times \mathbb{R}^N,\\
	u(0,x) = \varepsilon u_0(x), \quad u_t(0,x) = \varepsilon u_1(x),& x\in \mathbb{R}^N,
	\end{array}\right.
\end{align}
where
$N \ge 1$,
$u = u(t,x)$
is a real-valued unknown function,
$\varepsilon$
is a small positive parameter,
$u_0, u_1$
are given initial data,
$p>1$,
and the coefficient of the damping term has the form
\begin{align}
\label{damping}
	c(t,x) = a(x)b(t) = a_0 \langle x \rangle^{-\alpha} (1+t)^{-\beta},
\end{align}
where $\langle x \rangle := \sqrt{1+|x|^2}$, $a(x) = a_0 \langle x \rangle^{-\alpha}$, $b(t) = (1+t)^{-\beta}$,
with some
$a_0 > 0$, $\alpha < 2$ and $\beta > -1$.
In particular, in this paper we mainly consider the cases
\begin{align}
\label{beta}
	\alpha<0, \beta =0\quad \mbox{or} \quad \alpha < 0, \beta = 1.
\end{align}
We assume that the initial data satisfies
\begin{align}
\label{ini}
	u_0 \in H^1(\mathbb{R}^N), \quad
	u_1 \in L^2(\mathbb{R}^N),\quad
	{\rm supp\,}(u_0, u_1) \subset \{ x \in \mathbb{R}^N ; |x| \le R_0\}
\end{align}
with some
$R_0>0$.

Our aim is to determine the critical exponent
$p_c$.
Here, the meaning of the {\em critical exponent} is the following:
if
$p > p_c$, then for any $(u_0, u_1)$ satisfying \eqref{ini},
there exists a unique global solution for sufficiently small
$\varepsilon$;
if
$p \le p_c$, then there exists $(u_0, u_1)$ satisfying \eqref{ini},
the local solution blows up in finite time for any small $\varepsilon$.
In this paper, we will show that
under the conditions \eqref{damping} and \eqref{beta},
the critical exponent is determined by
\begin{align*}
	p_c = 1+ \frac{2}{N-\alpha}.
\end{align*}
Comparing with previous studies we will explain below,
our novelties are to determine the critical exponent for
spatially increasing damping, and
to give the blow-up of solutions for the damping depending on
the time and space variables.

The Cauchy problem of the linear damped wave equation
\begin{align}
\label{ldw}
	\left\{ \begin{array}{ll}
	u_{tt} - \Delta u + c(t,x) u_t = 0,&(t,x)\in (0,\infty)\times \mathbb{R}^N,\\
	u(0,x) = \varepsilon u_0(x), \quad u_t(0,x) = \varepsilon u_1(x),& x\in \mathbb{R}^N,
	\end{array}\right.
\end{align}
with a damping coefficient
$c(t,x) = a(x)b(t) = a_0\langle x \rangle^{-\alpha}(1+t)^{-\beta}$
has been studied for a long time.
Roughly speaking, it is known that
if the damping is sufficiently strong, in other words,
effective, then the solution behaves like that of the corresponding parabolic equation
$c(t,x)u_t -\Delta u = 0$ (diffusion phenomenon).
On the other hand, if the damping is sufficiently weak, in other words, non-effective,
then the solution behaves like that of the wave equation without damping (scattering).

It is known that the classical damping $\alpha=\beta=0$
is included in the effective case,
and the diffusion phenomenon was studied by
\cite{Ma76, HsLi92, Ni97, Kar00, YaMi00, Ni03MathZ, MaNi03, HoOg04, Na04, ChHa03, SaWa17, Mi}.

On the other hand, Yamazaki \cite{Ya06} and Wirth \cite{Wi04, Wi06, Wi07JDE, Wi07ADE}
considered time-dependent damping $\alpha =0, \beta \in \mathbb{R}$,
and classified the behavior of the solution in the following way:
(i) Scattering: if $\beta > 1$, then the solution behaves like that of the wave equation without damping;
(ii) Scale-invariant weak damping: if $\beta =1$, then the asymptotic behavior of the solution
depends on $a_0$;
(iii) Effective: if $-1\le \beta <1$, then the solution behaves like that of the corresponding
parabolic equation;
(iv) Overdamping: if $\beta < -1$, then the solution does not decay to zero in general.

The space-dependent damping $\alpha \in \mathbb{R}, \beta =0$
was also studied by
\cite{RauTa74, Mo76, Ik05IJPAM, ToYo09, IkToYo13, Wa14, SoWa16, SoWa17, SoWa18, SoWa},
and similarly to the above, the behavior of the solution was classified in the following way:
(i) Scattering: if $\alpha >1$, then the solution behaves like that of the wave equation without damping;
(ii) Scale-invariant weak damping: if $\alpha =1$, then the asymptotic behavior of the solution
depends on $a_0$;
(iii) Effective: if $\alpha <1$, then the solution behaves like that of the corresponding
parabolic equation. We note that in the space-dependent case, the overdamping phenomenon
does not occur.

In a similar approach to the space-dependent case,
these results are partially extended to the space-time dependent damping
$\alpha, \beta \in \mathbb{R}$.
Mochizuki and Nakazawa \cite{MoNa96} proved that
the case $\alpha, \beta \ge 0, \alpha+\beta >1$ belongs to the scattering case.
For $0\le \alpha <1, -1<\beta <1$, $0<\alpha+\beta<1$,
by \cite{KeKe11, Kh11}, energy estimates of solutions were obtained and
they indicate the solution has diffusion phenomenon.


Based on the studies on the linear problem,
recently, the Cauchy problem of the semilinear damped wave equation \eqref{dw}
has been intensively studied.
In particular, if the damping is effective, we expect that
the critical exponent is the same as that of the corresponding parabolic problem.
Indeed, when
$\alpha = \beta = 0$,
it was shown by \cite{LiZh95, ToYo01, Zh01, KiQa02}
that the critical exponent is given by
$p_c = p_F(N) = 1+\frac{2}{N}$,
which is called the Fujita exponent named after the pioneering work by \cite{Fu66}.

For the time-dependent damping case, namely, $\alpha=0$,
it was revealed by \cite{LiNiZh12} that
the critical exponent remains $p_c = p_F(N)$ when $\alpha = 0, \beta \in (-1,1)$
(see \cite{Wa17, FuIkeWa, IkeSoWa} for the case $\beta = -1$).
When $\beta >1$, Lai and Takamura \cite{LaTa18} and Wakasa and Yordanov \cite{WakYo}
showed the small data blow-up
for the sub-Strauss or Strauss exponent
$1<p<\infty \ (N=1)$;
$1<p\le p_S(N) = \frac{N+1+\sqrt{N^2+10N-7}}{2(N-1)} \ (N\ge 2)$.
Recently, Liu and Wang \cite{LiWa} gave the global existence result
for $p>p_S(N)$ with $N=3,4$,
while the cases $N=2$ and $N\ge 5$ remain open.
On the other hand, in the scale-invariant case
$\beta = 1$, the situation becomes more complicated.
First, if $a_0$ is sufficiently large, we expect that the critical exponent coincides with $p_F(N)$.
Indeed, by \cite{DaLu13, Da15, Wa14}
it is proved that $p_c = p_F(N)$ holds for
$a_0 \ge \frac{5}{3} \ (N=1), 3 \ (N=2), N+2 \ (N\ge 3)$.
On the other hand, when
$a_0=2$,
D'Abbicco, Lucente and Reissig \cite{DaLuRe15} showed
$p_c = \max\{p_F(N), p_S(N+2) \}$
for $N\le 3$
(see \cite{DaLu15, Pa18a} for higher dimensional cases). 
This implies that the critical exponent depends on $a_0$.
For $a_0 \neq 2$, the best known result is by Ikeda and Sobajima \cite{IkeSopre3}.
They obtained the small data blow-up for
$N\ge 1$, $0<a_0 <\frac{N^2+N+2}{N+2}$ and $1<p\le p_S(N+a_0)$
(see also \cite{LaTaWa17, TuLi}).
Also, by \cite{IkeWa} it is proved that
the critical exponent is given by $p_c = 1$ when $\beta <-1$,
namely, the small data global existence holds for any $p>1$.

On the other hand, for the space-dependent damping case, namely, $\beta =0$,
Ikehata, Todorova and Yordanov \cite{IkToYo09} proved
if $0\le \alpha < 1$, then the critical exponent is given by
$p_c = 1+\frac{2}{N-\alpha}$.
However, there is no result in the case $\alpha < 0$,
where the damping coefficient is unbounded with respect to the space variable,
while we can expect that the critical exponent is still given by
$p_c = 1+\frac{2}{N-\alpha}$
in view of the result of linear problem \cite{SoWa18}.
When $\alpha =1$, the equation has scale-invariance and the critical exponent seems to change
depending on $a_0$.
Indeed, Ikeda and Sobajima \cite{IkeSopre2} proved the small data blow-up
for
$N\ge 3$, $0\le a_0 < \frac{(N-1)^2}{N+1}$, $\frac{N}{N-1}<p\le p_S(N+a_0)$,
while the global existence part remains open.

In contrast, there were only few results on time and space dependent cases.
By \cite{Wa12}, the small data global existence was proved
for $\alpha, \beta \ge 0, \alpha+\beta <1$ and $p>1+\frac{2}{N-\alpha}$.
Khader \cite{Kh13} also proved the small data global existence
for $0\le \alpha<1$, $-1<\beta<1$, $0<\alpha+\beta<1$, and
$p>1+\frac{4(\beta+1)}{2(N-\alpha)(\beta+1)-\beta(2-\alpha)}$.
However, there are no results on the small data blow-up for
subcritical or critical case.

We also refer the reader to
\cite{LiZh95, Ni03Ib, IkeWa15, LaZh, IkeIn, IkeSopre1,
Wak16}
for studies on estimates of lifespan of blow-up solutions.

Summarizing the previous studies, we can conjecture the following
for the critical exponent of the problem \eqref{dw} with
the condition \eqref{damping}.
\medskip

\noindent
{\bf Conjecture}\\
(i) For $\alpha < 2$, $\beta >-1$ with $\alpha+\beta < 1$,
the critical exponent is $p_c = 1+\frac{2}{N-\alpha}$.\\
(ii) For $\alpha, \beta \in \mathbb{R}$ with $\alpha + \beta =1$,
the equation has scale-invariance and the critical exponent will depend on $a_0$.\\
(iii) For $\alpha, \beta \in \mathbb{R}$ with $\alpha + \beta >1$,
the critical exponent is given by the Strauss number $p_c = p_S(N)$. 
\medskip

In this paper,
we present some partial answers which support the Conjecture (i).
In particular, we completely give the critical exponent in 
the case $\alpha<0, \beta=0$.

Before going to our main results, we mention the existence of the local solution.
\begin{proposition}[Existence of the local solution]\label{prop_lwp}
Let
$N \ge 1$
and let $c(t,x)$ has the form \eqref{damping}
with some $a_0>0$, $\alpha, \beta \in \mathbb{R}$.
We assume that the initial data satisfy \eqref{ini} with some $R_0>0$.
If $p$ satisfies
\begin{align}
\label{p}
	1 < p < \infty \ (N=1,2),\quad
	1 < p \le \frac{N}{N-2}\ (N \ge 3),
\end{align}
then for any
$\varepsilon >0$,
there exists a time
$T > 0$
such that the Cauchy problem \eqref{dw} admits a unique solution
\begin{align}
\label{u_class}
	u \in C([0, T) ; H^1(\mathbb{R}^N)) \cap C^1([0,T) ; L^2(\mathbb{R}^N))
\end{align}
satisfying
\begin{align}
\label{fps}
	{\rm supp\,}u(t,\cdot) \subset \{ x \in \mathbb{R}^N ; |x| \le R_0 + t \}.
\end{align}
Moreover, with the notion of the lifespan
\begin{align}
\label{ls}
	T(\varepsilon) :=
	\left\{ T \in (0,\infty] ;
	\ \mbox{there exists a unique solution} \ 
	u \ \mbox{in the class} \ \eqref{u_class} \right\},
\end{align}
we have the following blow-up alternative:
if
$T(\varepsilon) < \infty$,
then
\begin{align}
\label{bu_al}
	\lim_{t \to T(\varepsilon)-0} \| (u, u_t)(t) \|_{H^1\times L^2} = \infty
\end{align}
holds.
\end{proposition}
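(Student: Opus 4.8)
The plan is to construct the local solution by a contraction argument for the Duhamel (integral) formulation attached to the free wave propagator, treating both the damping term and the nonlinearity as source terms, and to close the estimates using the energy inequality for the wave equation together with the Sobolev embedding $H^1\hookrightarrow L^{2p}$. Writing $W(t)g:=\frac{\sin(t\sqrt{-\Delta})}{\sqrt{-\Delta}}g$ and denoting by $u_{\mathrm{lin}}$ the free wave solution with data $(\varepsilon u_0,\varepsilon u_1)$, a function $u$ solves \eqref{dw} precisely when it is a fixed point of
\begin{align*}
	\Phi(u)(t) = u_{\mathrm{lin}}(t) + \int_0^t W(t-s)\Big(-c(s,\cdot)u_t(s) + |u(s)|^p\Big)\,ds .
\end{align*}
The classical energy estimate for $w_{tt}-\Delta w=f$ with data $(w_0,w_1)$,
\begin{align*}
	\sup_{0\le t\le T}\|(w,w_t)(t)\|_{H^1\times L^2} \le C\Big(\|(w_0,w_1)\|_{H^1\times L^2} + \int_0^T\|f(s)\|_{L^2}\,ds\Big),
\end{align*}
(where the $\|w\|_{L^2}$-part is recovered from $\|w(t)\|_{L^2}\le\|w_0\|_{L^2}+\int_0^t\|w_t\|_{L^2}\,ds$, so $C$ stays uniform for $T\le 1$) is the only linear ingredient needed.

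Next I would fix $M>0$ and $T\in(0,1]$ and work in the complete metric space $X_{T,M}$ of functions $u\in C([0,T];H^1)\cap C^1([0,T];L^2)$ with $\|u\|_{X_T}:=\sup_{[0,T]}\|(u,u_t)(t)\|_{H^1\times L^2}\le M$ and $\mathrm{supp}\,u(t)\subset\{|x|\le R_0+t\}$. Two bounds on the source are required. First, by finite propagation speed the factor $\langle x\rangle^{-\alpha}$ (unbounded, since $\alpha<0$) is evaluated only on $\{|x|\le R_0+t\}$, so $c(t,\cdot)$ is bounded there by a constant $C_\ast=C_\ast(T,R_0)$, giving $\|c(s,\cdot)u_t(s)\|_{L^2}\le C_\ast M$. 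Second, $\||u(s)|^p\|_{L^2}=\|u(s)\|_{L^{2p}}^p\le C\|u(s)\|_{H^1}^p\le CM^p$, where $H^1\hookrightarrow L^{2p}$ holds exactly under \eqref{p} (for $N\ge3$ this is $2p\le\frac{2N}{N-2}$, i.e.\ $p\le\frac{N}{N-2}$, while for $N=1,2$ every finite exponent is admissible); this is the single place the upper restriction on $p$ enters. Combining with the energy estimate yields $\|\Phi(u)\|_{X_T}\le C_0\varepsilon\|(u_0,u_1)\|_{H^1\times L^2}+C_1T(C_\ast M+M^p)$, and $\Phi(u)(t)$ stays supported in $\{|x|\le R_0+t\}$ because the source is supported in $\{|x|\le R_0+s\}$ and $W(t-s)$ propagates at unit speed. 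Choosing $M:=2C_0\varepsilon\|(u_0,u_1)\|_{H^1\times L^2}$ and then $T$ small makes $\Phi:X_{T,M}\to X_{T,M}$.

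For the contraction I would subtract: $\Phi(u_1)-\Phi(u_2)$ is the Duhamel integral of $-c(s,\cdot)\big((u_1)_t-(u_2)_t\big)+\big(|u_1|^p-|u_2|^p\big)$ with zero data. The damping difference is linear and bounded by $C_\ast\|(u_1)_t-(u_2)_t\|_{L^2}$, while $\big||a|^p-|b|^p\big|\le C(|a|^{p-1}+|b|^{p-1})|a-b|$ together with H\"older and $H^1\hookrightarrow L^{2p}$ gives $\||u_1|^p-|u_2|^p\|_{L^2}\le CM^{p-1}\|u_1-u_2\|_{X_T}$. Hence $\|\Phi(u_1)-\Phi(u_2)\|_{X_T}\le C_2T(C_\ast+M^{p-1})\|u_1-u_2\|_{X_T}$, a contraction after shrinking $T$ once more. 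Banach's theorem then produces a unique $u\in X_{T,M}$ with $\Phi(u)=u$, i.e.\ a solution in the class \eqref{u_class} with the support property \eqref{fps}; uniqueness in the full class follows from the same difference estimate on any common interval of existence.

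Finally, the blow-up alternative \eqref{bu_al} is a routine continuation argument: the existence time produced above depends only on $\|(u_0,u_1)\|_{H^1\times L^2}$ and on the bound $C_\ast$, both finite as long as $T(\varepsilon)<\infty$; so if $\|(u,u_t)(t)\|_{H^1\times L^2}$ stayed bounded as $t\to T(\varepsilon)-0$, one could restart at a time $t_0<T(\varepsilon)$ close to $T(\varepsilon)$ and extend the solution by a fixed positive step past $T(\varepsilon)$, contradicting maximality in \eqref{ls}. The step demanding the most care — more a matter of bookkeeping than a genuine obstacle — is the unboundedness of the damping in the far field: since $\alpha<0$, $c(t,x)$ is not globally bounded, and its control rests entirely on finite propagation speed, which is exactly why the support condition must be carried inside the iteration space $X_{T,M}$ and shown to be preserved rather than recovered afterwards.
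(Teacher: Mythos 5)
Your proposal is correct and is essentially the paper's own argument: the paper proves Proposition \ref{prop_lwp} by exactly the ``standard energy estimate and contraction mapping principle'' it cites from \cite[Proposition 2.1]{IkTa05}, which is the Duhamel fixed-point scheme you carry out, with the Sobolev embedding $H^1 \hookrightarrow L^{2p}$ accounting for the restriction \eqref{p}. Your one substantive addition --- keeping the support condition \eqref{fps} inside the iteration space so that finite propagation speed tames the unbounded coefficient $c(t,x)$ when $\alpha<0$ --- is precisely the point that makes the cited standard argument go through in this setting.
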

The proof is given by a standard energy estimate and the contraction mapping principle
(see \cite[Proposition 2.1]{IkTa05}).

Our first main results are the small data global existence
in the supercritical case.
\begin{theorem}[Small data global existence for $\beta \in (-1,1)$ in the supercritical case]\label{thm_gwp}
Let
$N \ge 1$
and let $c(t,x)$ has the form \eqref{damping}
with some $a_0>0$, $\alpha < 0$ and $\beta \in (-1,1)$.
We assume that the initial data satisfy \eqref{ini} with some $R_0>0$.
If $p$ satisfies
\begin{align}
\label{supercritical}
	1+\frac{2}{N-\alpha} < p < \infty \ (N=1,2),\quad
	1+\frac{2}{N-\alpha} < p \le \frac{N}{N-2}\ (N \ge 3),
\end{align}
then there exists a constant
$\varepsilon_0 > 0$
depending on
$N, p, a_0, \alpha, \beta, u_0, u_1, R_0$
such that for any
$\varepsilon \in (0, \varepsilon_0]$,
the Cauchy problem \eqref{dw} admits a unique global solution
\begin{align*}
	u \in C([0, \infty) ; H^1(\mathbb{R}^N)) \cap C^1([0,\infty) ; L^2(\mathbb{R}^N)).
\end{align*}
\end{theorem}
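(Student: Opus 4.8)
The plan is to establish a global-in-time a priori bound on a weighted energy and to close it by a continuity argument. By Proposition~\ref{prop_lwp} a unique local solution exists in the class \eqref{u_class} with the finite propagation speed \eqref{fps}, and the blow-up alternative \eqref{bu_al} reduces global existence to proving that a suitable energy norm $M(T)$ stays bounded uniformly in $T<T(\varepsilon)$. Since $\alpha<0$ and $\beta<1$ force $\alpha+\beta<1$, the damping is effective, so I expect the solution to inherit the decay of the associated parabolic equation $c(t,x)v_t-\Delta v=|v|^p$; the compact support \eqref{fps} will be used throughout to trade powers of $\langle x\rangle$ against powers of $(1+t)$ near the light cone.

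First I would introduce the exponential-type weight
\[
  \psi(t,x)=\frac{\delta\,\langle x\rangle^{2-\alpha}}{(1+t)^{1+\beta}},
\]
whose time exponent $1+\beta$ is forced by the parabolic scaling of $c(t,x)\partial_t-\Delta$, so that $\langle x\rangle^{2-\alpha}$ and $(1+t)^{1+\beta}$ balance. A direct computation, using $\nabla\langle x\rangle^{2-\alpha}=(2-\alpha)\langle x\rangle^{-\alpha}x$ and $|x|^2\le\langle x\rangle^2$, gives
\[
  \psi_t+\frac{|\nabla\psi|^2}{c(t,x)}
  \le \delta\,(1+t)^{-(2+\beta)}\langle x\rangle^{2-\alpha}
  \left[-(1+\beta)+\frac{\delta(2-\alpha)^2}{a_0}\right].
\]
Because $\beta>-1$, choosing $\delta$ small makes the right-hand side nonpositive. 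This pointwise inequality is the algebraic engine of the whole argument.

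Next I would derive two weighted energy identities, by testing \eqref{dw} with $e^{2\psi}u_t$ and with $e^{2\psi}u$. The first produces $\tfrac12\frac{d}{dt}\int e^{2\psi}(u_t^2+|\nabla u|^2)\,dx$ together with the dissipation $\int c\,e^{2\psi}u_t^2\,dx$; the dangerous cross term $2\int e^{2\psi}u_t\,\nabla\psi\cdot\nabla u\,dx$ is split by Young's inequality into a part absorbed by the dissipation and a part that combines with $\int\psi_t e^{2\psi}|\nabla u|^2\,dx$ into exactly the quantity controlled by the sign condition above. The second multiplier controls a weighted $L^2$ norm of $u$. Combining the two with appropriate time weights, I would build a total energy $M(t)$ and aim for a closed inequality of the schematic shape $M(t)\le C\varepsilon^2+C\,M(t)^{\theta}$ with some $\theta>1$, from which $M(t)\le C'\varepsilon^2$ follows for small $\varepsilon$ by continuity.

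The heart of the matter, and the step I expect to be hardest, is the nonlinear term $\int e^{2\psi}u_t|u|^p\,dx$ (and $\int e^{2\psi}|u|^{p+1}\,dx$ from the second multiplier). Here I would apply the Caffarelli--Kohn--Nirenberg inequality in the special form adapted to the weight $\langle x\rangle^{-\alpha}$, which is the natural measure for the effective dimension $N-\alpha$, to interpolate the weighted $L^{p+1}$ norm between the weighted $L^2$ energy and the weighted gradient energy. The supercriticality $p>1+\tfrac{2}{N-\alpha}$ is precisely what makes the resulting time-decay powers integrable, so that the nonlinear contribution closes at superlinear order in $M(t)$; the upper bound $p\le\tfrac{N}{N-2}$ for $N\ge3$ is merely inherited from the $H^1$ local theory \eqref{p}. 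The delicate points will be matching the CKN exponents with the decay rates of the weighted energies and handling the weight $\langle x\rangle$ uniformly both near the origin and in the far field. Once $M(t)\le C'\varepsilon^2$ is in hand, \eqref{bu_al} yields $T(\varepsilon)=\infty$, completing the proof.
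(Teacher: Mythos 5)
Your outline reproduces the paper's strategy at the structural level (local theory plus blow-up alternative, the two multipliers $e^{2\psi}u_t$ and $e^{2\psi}u$, a time-weighted energy $M(t)$ closed by Caffarelli--Kohn--Nirenberg interpolation), but there is a genuine gap exactly at what you call the ``algebraic engine.'' The sign condition $\psi_t+|\nabla\psi|^2/c\le 0$ is only half of what the proof needs. When you test with $e^{2\psi}u$, the cross term $2\int e^{2\psi}u\,\nabla\psi\cdot\nabla u\,dx$ produces, after integration by parts, the term $\int e^{2\psi}(\Delta\psi)u^2\,dx$, and it is the \emph{sharp} lower bound
\begin{align*}
\Delta\psi \ \ge\ \Bigl(\tfrac{(N-\alpha)(1+\beta)}{2(2-\alpha)}-\delta_2\Bigr)\frac{c(t,x)}{1+t}
\end{align*}
(cf.\ \eqref{2_sowapsi_2} and its $\beta$-dependent analogue in Appendix B) that lets one run the time weight at the near-parabolic exponent $\tfrac{(N-\alpha)(1+\beta)}{2-\alpha}-\delta_0$ and makes the nonlinear contributions summable precisely for $p>1+\tfrac{2}{N-\alpha}$. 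This forces the amplitude of the weight to be the \emph{fixed} value $\mu=\tfrac{a_0}{(2-\alpha)^2(2+\delta)}$, not a free small parameter: with your weight $\psi=\delta\langle x\rangle^{2-\alpha}(1+t)^{-(1+\beta)}$ and $\delta$ small (which is how you arrange the first inequality), one only gets $\Delta\psi=O(\delta)\cdot\frac{c(t,x)}{1+t}$, the extractable decay rate degenerates, and the closure inequality $M(t)\le C\varepsilon^2+CM(t)^{(p+1)/2}$ only holds for $p$ above $1+\tfrac{2}{N-\alpha}$ by an $O(1)$ margin --- not in the full supercritical range claimed.

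Moreover, once $\mu$ is pinned at that value, the unmodified Ikehata--Todorova--Yordanov weight you propose actually \emph{fails} the Laplacian bound when $\alpha<0$: since
\begin{align*}
\Delta\langle x\rangle^{2-\alpha}=(2-\alpha)(N-\alpha)\langle x\rangle^{-\alpha}+\alpha(2-\alpha)\langle x\rangle^{-2-\alpha},
\end{align*}
the second term is negative for $\alpha<0$ and is not small relative to the first on a bounded set --- at $x=0$ the deficit is of order $|\alpha|$, independent of any small parameter. The paper flags exactly this failure of \eqref{2_itypsi_2} for $\alpha<0$ and repairs it by subtracting the Newton potential term $\mathcal{N}\ast\bigl(\alpha(2-\alpha)\langle x\rangle^{-2-\alpha}\eta_{R_\delta}\bigr)$, which cancels the bad part of the Laplacian on $\{|x|\le R_\delta\}$, and adding a large constant $A_0$ to restore the first inequality (Definition \ref{2_def_psi}, Lemma \ref{2_lem_psi}); this works because the correction and its gradient are bounded by $\langle x\rangle^{2-N}$ (or $\log(2+|x|)$ if $N=2$) and $\langle x\rangle^{1-N}$. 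Without this correction or some substitute, your step ``the second multiplier controls a weighted $L^2$ norm of $u$'' does not deliver the decay rate your CKN bookkeeping presupposes, so the argument as proposed does not close for any $p$ down to the critical threshold $1+\tfrac{2}{N-\alpha}$.
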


\begin{theorem}[Small data global existence for $\beta =1$ in the supercritical case]\label{thm_gwp_b1}
Let
$N \ge 1$
and let $c(t,x)$ has the form \eqref{damping}
with some $a_0 >0$, $\alpha < 0$ and $\beta = 1$.
We assume that the initial data satisfy \eqref{ini} with some $R_0>0$.
If $p$ satisfies \eqref{supercritical},
then there exist constants
$a_{\ast} > 0$
depending on $p$
and
$\varepsilon_0 > 0$
depending on
$N, p, a_0$, $\alpha, u_0, u_1, R_0$
such that if
$a_0 \ge a_{\ast}$
and
$\varepsilon \in (0, \varepsilon_0]$,
then
the Cauchy problem \eqref{dw} admits a unique global solution
\begin{align*}
	u \in C([0, \infty) ; H^1(\mathbb{R}^N)) \cap C^1([0,\infty) ; L^2(\mathbb{R}^N)).
\end{align*}
\end{theorem}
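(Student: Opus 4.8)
The plan is to run a weighted energy method with an exponential weight tuned to the parabolic scaling of the damping, mirroring the strategy for the space-dependent case $\beta=0$ but with the time-weight adjusted to the scale-invariant structure $\beta=1$. Concretely, I would use the weight $e^{2\psi}$ with
\[
  \psi(t,x) = \frac{\lambda\,\langle x\rangle^{2-\alpha}}{(1+t)^{2}},
\]
where $\lambda>0$ is a free parameter. The exponent $2=1+\beta$ is forced by matching the time powers in the Hamilton--Jacobi-type inequality $\psi_t+\frac{|\nabla\psi|^2}{c}\le 0$. A direct computation using $\nabla\langle x\rangle^{2-\alpha}=(2-\alpha)\langle x\rangle^{-\alpha}x$ and $\langle x\rangle^{-\alpha}|x|^2\le\langle x\rangle^{2-\alpha}$ yields
\[
  \psi_t+\frac{|\nabla\psi|^2}{c}\le\frac{\lambda\,\langle x\rangle^{2-\alpha}}{(1+t)^{3}}\Big(-2+\frac{\lambda(2-\alpha)^2}{a_0}\Big),
\]
so the subsolution property holds once $\lambda\le 2a_0/(2-\alpha)^2$. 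Because of the finite speed of propagation \eqref{fps}, on $\mathrm{supp}\,u$ one has $\langle x\rangle\lesssim 1+t$ and hence $\psi\lesssim(1+t)^{-\alpha}$ stays finite for each fixed $t$, so all the weighted integrals are well defined even though $\alpha<0$ makes the damping spatially unbounded.

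Second, I would derive the two basic weighted identities. Multiplying \eqref{dw} by $e^{2\psi}u_t$ and integrating by parts gives
\[
\begin{aligned}
&\tfrac12\tfrac{d}{dt}\int e^{2\psi}\big(u_t^2+|\nabla u|^2\big)
-\int\psi_t e^{2\psi}\big(u_t^2+|\nabla u|^2\big)\\
&\qquad+\int e^{2\psi}c\,u_t^2
+2\int e^{2\psi}u_t\,\nabla\psi\cdot\nabla u
=\int e^{2\psi}u_t|u|^p.
\end{aligned}
\]
Splitting the damping term with a Young parameter $\delta$ and using $2|u_t\,\nabla\psi\cdot\nabla u|\le \delta\, c\,u_t^2+\delta^{-1}\frac{|\nabla\psi|^2}{c}|\nabla u|^2$, the cross term together with a fraction of the damping and $-\int\psi_t e^{2\psi}|\nabla u|^2$ collapse into $-\int e^{2\psi}\big(\psi_t+\delta^{-1}|\nabla\psi|^2/c\big)|\nabla u|^2\ge0$ precisely by the subsolution property (adjusting $\lambda$), while $-\int\psi_t e^{2\psi}u_t^2\ge0$ since $\psi_t<0$. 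Multiplying instead by $e^{2\psi}u$ produces the companion identity controlling $\int e^{2\psi}u^2$ and the damping mass $\int e^{2\psi}c\,u^2=\frac{a_0}{1+t}\int e^{2\psi}\langle x\rangle^{-\alpha}u^2$, whose coefficient carries the explicit factor $a_0$.

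Third, I would introduce time-weighted energies $(1+t)^{\theta_j}\mathcal E_j(t)$ built from these quantities and close a coupled system of differential inequalities. The scale-invariant damping enters here: in the $\beta=1$ case the achievable decay exponents $\theta_j$ grow with $a_0$ (heuristically, the Liouville-type reduction $u\sim(1+t)^{-a_0/2}w$ already shows the linear decay rate scales with $a_0$), so making every nonlinear contribution time-integrable requires the $\theta_j$ to exceed a threshold fixed by the nonlinear power. The nonlinear terms $\int e^{2\psi}u_t|u|^p$ and $\int e^{2\psi}|u|^{p+1}$ are estimated by a weighted Gagliardo--Nirenberg / Caffarelli--Kohn--Nirenberg inequality adapted to the weight $e^{\psi}$ and the spatial factor $\langle x\rangle^{-\alpha}$ (here $\langle x\rangle^{-\alpha}\ge1$ is favorable), converting them into powers of the $\mathcal E_j$. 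The supercriticality $p>1+\frac{2}{N-\alpha}$ is exactly the condition making the resulting powers of $(1+t)$ integrable, \emph{provided} the decay rates are sharp enough, and this is what forces $a_0\ge a_\ast$ with $a_\ast=a_\ast(p)$ growing as $p$ decreases toward $p_c$; the bound $p\le N/(N-2)$ for $N\ge3$ enters only to keep the nonlinearity controllable in the $H^1$-energy space via Sobolev embedding.

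Finally I would close the argument by a standard continuity (bootstrap) argument: setting $M(t)=\sum_j\sup_{0\le s\le t}(1+s)^{\theta_j}\mathcal E_j(s)$, the differential inequalities yield $M(t)\le C\varepsilon^2+C\,M(t)^{q}$ for some $q>1$, so $M$ remains $\le 2C\varepsilon^2$ for all $t$ once $\varepsilon\le\varepsilon_0$, giving global existence through the blow-up alternative \eqref{bu_al}. I expect the main obstacle to be the third step: selecting the exact combination of weighted energies and the admissible decay exponents $\theta_j$ in the scale-invariant regime, and thereby extracting the sharp dependence $a_\ast=a_\ast(p)$, since this is where the $\beta=1$ structure genuinely departs from the $\beta=0$ case and where the $a_0$-dependent decay must be matched against the critical power $1+\frac{2}{N-\alpha}$.
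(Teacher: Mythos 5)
There is a genuine gap, and it is exactly the difficulty this paper was written to overcome. Your weight $\psi=\lambda\langle x\rangle^{2-\alpha}/(1+t)^2$ is the Ikehata--Todorova--Yordanov weight \eqref{2_itypsi} with the time power adjusted to $\beta=1$, and you only verify the Hamilton--Jacobi property $\psi_t+|\nabla\psi|^2/c\le 0$. But the energy identity obtained from the multiplier $e^{2\psi}u$ produces the term $e^{2\psi}(\Delta\psi)u^2$, and closing the scheme at the sharp parabolic decay rate requires the lower bound $\Delta\psi\ge\bigl(\tfrac{(N-\alpha)(1+\beta)}{2(2-\alpha)}-\delta\bigr)\tfrac{c(t,x)}{1+t}$ (the analogue of \eqref{2_sowapsi_2}). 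For the naive weight, $\Delta\psi$ contains the contribution $\lambda\alpha(2-\alpha)\langle x\rangle^{-2-\alpha}(1+t)^{-2}$, which for $\alpha<0$ is negative and, near $x=0$, destroys precisely the sharp constant: the achievable coefficient there is proportional to $N$ rather than $N-\alpha$. The paper explicitly notes that \eqref{2_itypsi_2} fails when $\alpha<0$ and repairs it by adding to $\langle x\rangle^{2-\alpha}$ a Newton-potential correction $-\mathcal{N}\ast\bigl(\alpha(2-\alpha)\langle x\rangle^{-2-\alpha}\eta_{R_\delta}\bigr)+A_0$ (Definition \ref{2_def_psi}, and \eqref{A5} for $\beta=1$), which cancels the bad term on $\{|x|\le R_\delta\}$ while preserving the Hamilton--Jacobi inequality after enlarging $A_0$. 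Without this correction, the admissible decay exponents in $M(t)$ are strictly smaller, and your final condition for integrability of the nonlinear terms becomes $p>1+\tfrac{2}{\tilde N}$ with $\tilde N<N-\alpha$; the argument then does not cover the full supercritical range down to $1+\tfrac{2}{N-\alpha}$, which is the content of the theorem.

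A second, related problem is your account of where $a_\ast=a_\ast(p)$ comes from. You propose that the decay exponents $\theta_j$ grow with $a_0$ (motivated by the $\alpha=0$, $\beta=1$ reduction $u\sim(1+t)^{-a_0/2}w$), and you defer this step as the main obstacle. In the paper's proof the exponents are \emph{fixed} at the parabolic values $\tfrac{(N-\alpha)(1+\beta)}{2(2-\alpha)}+\beta+1-\delta_0$ and $\tfrac{(N-\alpha)(1+\beta)}{2(2-\alpha)}-\delta_0$ (see \eqref{A6}), independently of $a_0$; the largeness of $a_0$ enters elsewhere. The absorption steps need $(t_0+t)c(t,x)\gg 1$, and by \eqref{A10} one has $(t_0+t)c(t,x)\ge a_0t_0^{1-\beta_+}$, which for $\beta=1$ degenerates to $a_0$: unlike the case $\beta\in(-1,1)$, taking $t_0$ large gains nothing, so the absorption inequalities $1+\tfrac{C(\delta_6)}{a_0}\le\tfrac{\nu}{16}a_0$ and $\tfrac{8}{\delta}\le\tfrac{\nu}{2}a_0$ force $a_0$ large depending on $\nu,\delta,\delta_6$, hence on $\delta_0$, hence on $p$ (since $\delta_0<\tfrac{2(1+\beta)}{2-\alpha}(N-\alpha-\tfrac{2}{p-1})$ must shrink as $p\downarrow 1+\tfrac{2}{N-\alpha}$). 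So the $a_0$-dependence you flagged as open is resolved in the paper by a fixed-rate energy scheme plus this absorption mechanism, not by $a_0$-dependent decay rates; as written, your third step would not close.
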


The second main result is
the finite time blow-up of the solution in the subcritical and the critical cases
when $\alpha$ and $\beta$ satisfy \eqref{beta}.
Moreover, we give the sharp upper estimates of the lifespan.
\begin{theorem}[Blow-up in the subcritical or the critical case]\label{thm_bu}
Let
$N \ge 1$,
and let $c(t,x)$ has the form \eqref{damping}
with some $a_0>0$, $\alpha$ and $\beta$ satisfying \eqref{beta}.
Moreover, we assume that $p$ satisfies
\begin{align}
\label{critical}
	1 < p \le 1+\frac{2}{N-\alpha},
\end{align}
and the initial data satisfy \eqref{ini} with some $R_0>0$ and
\begin{align}
\label{ini_pos}
	\int_{\mathbb{R}^N} \left(
		u_1(x) + (a_0\langle x \rangle^{-\alpha} - \beta) u_0(x)
		\right)\, dx > 0. 
\end{align}
Then, there exist constants
$\varepsilon_1 >0$
and
$C>0$
depending on
$N, p, a_0, \alpha, u_0, u_1, R_0$
such that for any
$\varepsilon \in (0, \varepsilon_1]$,
the lifespan of the local solution is estimated as
\begin{align}
\label{upp_ls}
	T(\varepsilon)
	\le
	\begin{cases}
		C \varepsilon^{-\frac{2-\alpha}{2(1+\beta)} \left( \frac{1}{p-1} - \frac{N-\alpha}{2} \right)^{-1}}
			&1<p<1+\frac{2}{N-\alpha},\\
		\exp \left( C \varepsilon^{-(p-1)} \right)
			&p = 1+ \frac{2}{N-\alpha}.
	\end{cases}
\end{align}
\end{theorem}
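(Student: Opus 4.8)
The plan is to use the weak-formulation (test-function) method of Ikeda and Sobajima \cite{IkeSopre1}. First I would multiply \eqref{dw} by a test function $\Psi(t,x)$, integrate over $[0,T]\times\mathbb{R}^N$, and move every derivative onto $\Psi$. By the finite propagation property \eqref{fps} all spatial boundary terms vanish, and with the normalization $\Psi(0,x)=1$, $\partial_t\Psi(0,x)=\beta$ the Cauchy-data contribution collapses to exactly $-\varepsilon I_0$, where $I_0:=\int_{\mathbb{R}^N}\bigl(u_1+(a_0\langle x\rangle^{-\alpha}-\beta)u_0\bigr)\,dx$ is positive precisely by hypothesis \eqref{ini_pos}. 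Writing $A\Psi:=\Psi_{tt}-\Delta\Psi-\partial_t(c\Psi)$ for the formal adjoint operator, the identity becomes
\[
\int_0^T\!\!\int_{\mathbb{R}^N}|u|^p\Psi\,dx\,dt+\varepsilon I_0=\int_0^T\!\!\int_{\mathbb{R}^N}u\,(A\Psi)\,dx\,dt.
\]

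The decisive choice is $\Psi(t,x)=\eta_T(t)^{2p'}\Phi(t,x)$, where $p'=p/(p-1)$, the cutoff $\eta_T$ equals $1$ on $[0,T/2]$ and vanishes at $t=T$, and $\Phi>0$ is a (super)solution of the adjoint equation $A\Phi=0$ carrying the prescribed data at $t=0$. I would build $\Phi$ from the self-similar profile of the associated weighted heat equation $c\,v_t-\Delta v=0$: the effective parabolic scaling $|x|^{2-\alpha}\sim(1+t)^{1+\beta}$ forces the self-similar exponent $\sigma=\tfrac{2-\alpha}{1+\beta}$ and produces a profile concentrated in $\{|x|\lesssim(1+t)^{1/\sigma}\}$ (inside the light cone, since $\alpha<0$ and \eqref{beta} give $\sigma>1$) with the exponential-type spatial weight attached to $\langle x\rangle^{-\alpha}$. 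Because $A\Phi=0$, the Leibniz rule shows that $A\Psi$ is supported in the cutoff strip $t\in[T/2,T]$ and carries at least one factor $\partial_t\eta_T=O(1/T)$; a direct expansion gives $A\Psi\sim\eta_T^{2p'-1}\eta_T'\,(\Phi_t-c\Phi)+\eta_T^{2p'-2}(\eta_T')^2\Phi+\eta_T^{2p'-1}\eta_T''\Phi$.

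Then I would apply Hölder's inequality with exponents $p,p'$, estimating $\left|\int_0^T\!\!\int u\,(A\Psi)\right|\le I(T)^{1/p}J(T)^{1/p'}$, where $I(T):=\int_0^T\!\!\int|u|^p\Psi$ and $J(T):=\int_0^T\!\!\int|A\Psi|^{p'}\Psi^{-(p'-1)}$; the exponent $2p'$ on $\eta_T$ is chosen exactly so that the residual power of $\eta_T$ in the integrand of $J(T)$ is nonnegative, hence $J(T)<\infty$. Absorbing $\tfrac12 I(T)$ by Young's inequality leaves $\varepsilon I_0\le C\,J(T)$. The heart of the matter is the scaling estimate for $J(T)$: the factors $1/T$ from $\partial_t\eta_T$, together with the space-time measure of the self-similar profile and the weight $c^{p'}$, should yield $J(T)\le C\,T^{-\frac{2}{\sigma}\left(\frac{1}{p-1}-\frac{N-\alpha}{2}\right)}$ in the subcritical range. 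I expect the construction of $\Phi$ with sharp space-time asymptotics, and the verification that $J(T)$ carries exactly this $T$-exponent despite the growing weight $\langle x\rangle^{-\alpha}$ (recall $\alpha<0$), to be the main obstacle.

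Finally, set $\delta:=\tfrac{1}{p-1}-\tfrac{N-\alpha}{2}$. In the subcritical case $1<p<1+\tfrac{2}{N-\alpha}$ one has $\delta>0$, so $\varepsilon I_0\le C\,T^{-2\delta/\sigma}$ fails once $T$ is large; the threshold $T(\varepsilon)\sim\varepsilon^{-\sigma/(2\delta)}$ gives precisely the polynomial bound in \eqref{upp_ls}, since $\sigma/(2\delta)=\tfrac{2-\alpha}{2(1+\beta)}\left(\tfrac{1}{p-1}-\tfrac{N-\alpha}{2}\right)^{-1}$. In the critical case $\delta=0$ the power of $T$ degenerates and the same computation produces only a logarithmic gain; I would then run the refined, iterated version of the argument (a logarithmically corrected test function, as in \cite{IkeSopre1}), which upgrades the bound to $T(\varepsilon)\le\exp\left(C\varepsilon^{-(p-1)}\right)$, completing \eqref{upp_ls}.
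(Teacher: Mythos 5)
Your overall skeleton (multiply by a test function, integrate by parts so the data collapse to $\varepsilon I_0$ via the normalization $\Psi(0,\cdot)=1$, $\partial_t\Psi(0,\cdot)=\beta$, then H\"older with exponents $p,p'$ and absorption, giving $\varepsilon I_0\le C J(T)$ with $J(T)\lesssim T^{-\frac{2}{\sigma}(\frac{1}{p-1}-\frac{N-\alpha}{2})}$, $\sigma=\frac{2-\alpha}{1+\beta}$) reproduces the correct algebra and the correct exponents in \eqref{upp_ls}. But there is a genuine gap at the point you yourself flag as ``the main obstacle'': your argument needs a globally positive function $\Phi$ solving the hyperbolic adjoint equation $A\Phi=\Phi_{tt}-\Delta\Phi-\partial_t(c\Phi)=0$ \emph{exactly}, with Gaussian-type two-sided bounds matching the self-similar profile of $c\,v_t=\Delta v$. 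No such explicit solution is available here: for $\alpha<0$ with $\beta\in\{0,1\}$ and $\alpha+\beta<1$ the equation has no scale invariance (the hypergeometric-type constructions of Ikeda--Sobajima work only in the scale-invariant cases $\alpha=1$ or $\beta=1$ with $\alpha=0$), and a parabolic self-similar profile is \emph{not} a solution of the hyperbolic adjoint equation --- the leftover $\Phi_{tt}$ term has no sign, so on $[0,T/2]$, where $\eta_T\equiv 1$ and hence your cutoff produces no factor $1/T$, the bulk term $\iint u\,A\Psi$ survives with a sign-changing $u$ and cannot be absorbed. A supersolution ($A\Phi\ge 0$) does not help either, since after H\"older you take absolute values. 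So as written the identity $\iint|u|^p\Psi+\varepsilon I_0=\iint u\,A\Psi$ does not reduce to a strip estimate, and the claimed bound on $J(T)$ rests on an unconstructed object.

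The paper avoids this entirely by an elementary device you are missing: for $\beta=1$, multiply \eqref{dw} by $(1+t)$; since $(1+t)u_{tt}+a(x)u_t=\partial_t\bigl[(1+t)u_t+(a(x)-1)u\bigr]$, the whole linear part becomes divergence form, which is exactly why the data functional in \eqref{ini_pos} appears (and for $\beta=0$ no multiplication is needed). Then no adjoint solution is required at all: one takes the bare compactly supported cutoff $\psi_R=[\eta((|x|^{2-\alpha}+t^2)/R^2)]^{2p'}$ adapted to the parabolic scaling (note your time-only cutoff $\eta_T(t)$ also loses the crucial spatial localization --- on the support of $\psi_R$ one has $\langle x\rangle^{-\alpha}\le CR^{-2\alpha/(2-\alpha)}$, which is how the paper tames the \emph{growing} coefficient $a(x)$ in \eqref{3_ineq}; in your strip $t\in[T/2,T]$ the factor $c\Phi$ grows in $x$ over the whole light cone and must be beaten by unproven decay of $\Phi$). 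Finally, your critical case is only gestured at: the paper's actual mechanism is not a log-corrected test function but integration of \eqref{3_ineq} in the scaling parameter, $Y(\rho)=\int_0^{\rho}\bigl(\iint|u|^p(1+t)\psi_R^{\ast}\,dx\,dt\bigr)R^{-1}dR$, which yields the ODE $(\varepsilon I_0+Y(R))^p\le CRY'(R)$ and hence $\log R\lesssim(\varepsilon I_0)^{-(p-1)}$, giving $T(\varepsilon)\le\exp(C\varepsilon^{-(p-1)})$. To repair your proposal, replace the exact adjoint solution $\Phi$ by the $(1+t)$-multiplication plus scaled cutoff, and work out the critical case via this $Y$-argument.
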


\begin{remark}
{\rm (i)} By Theorems \ref{thm_gwp}, and \ref{thm_bu} we conclude that
if the damping term is given by \eqref{damping} with some
$a_0 > 0$, $\alpha, \beta$ satisfying $\alpha <0, \beta =0$,
then the critical exponent of the Cauchy problem \eqref{dw} is determined by
\begin{align*}
	p_c = 1 + \frac{2}{N-\alpha}.
\end{align*}
When $\alpha < 0, \beta =1$,
by Theorems \ref{thm_gwp_b1} and \ref{thm_bu},
the critical exponent is almost determined as the above one,
in the sense that if
$1<p\le 1+\frac{2}{N-\alpha}$, then the small data blow-up occurs (for all $a_0 > 0$),
and if $p>1+\frac{2}{N-\alpha}$, then the small data global existence holds
provided that $a_0$ is sufficiently large (depending on $p$).

{\rm (ii)}
When the damping is effective, namely, $\alpha+\beta<1$,
if we do not impose the condition \eqref{beta},
the blow-up of solutions in the subcritical or the critical case
$1<p\le 1+\frac{2}{N-\alpha}$
is still an open problem.
\end{remark}

We shall comment on the method of proof and construction of the paper.
Theorem \ref{thm_gwp} is proved in Section 2 by weighted energy method
with a weight function having the form
$e^{\psi(t,x)}$ with an appropriate function $\psi(t,x)$ (see Definition \ref{2_def_psi}).
Such a weight function was developed by
\cite{Ik05IJPAM, ToYo09, Ni10, SoWa17}.
Making use of this weight, we can estimate the weighted energy of the solution
by the sum of the initial energy and the nonlinear terms (see Lemma \ref{2_lem_en}).
Then, to control the nonlinear terms, we apply
the Caffarelli--Kohn--Nirenberg inequality, which is suitable with
the energy including polynomially increasing coefficient $a(x)$.

Theorem \ref{thm_gwp_b1} can be also proved in the same strategy.
To avoid proceeding the similar computations as before,
we emphasize the difference from the proof of Theorem \ref{thm_gwp}
by giving only the outline of proof in Appendix B.

For Theorem \ref{thm_bu}, in Section 3, we apply the so-called test function method
developed by \cite{Zh01, LiNiZh12, DaLu13, IkeSopre1}.
Multiplying the equation by $(1+t)$, we can transform the linear part of the equation \eqref{dw} into
divergence form.
This is a simple but crucial idea in the proof.
Then, we further multiply a test function scaled by a large parameter $R \in (0,T(\varepsilon))$,
and apply the integration by parts, which gives a certain estimate
including the parameter $R$,
the initial data, and the nonlinear term.
Finally, letting $R$ to $T(\varepsilon)$, we have the estimate of the lifespan.


\section{Small data global existence in the supercritical case}
To keep the paper readable length,
we will give the detailed proof only for the case
$\alpha< 0$ and $\beta =0$, namely, $c(t,x) = a(x)$,
and for the other cases we will give an outline of the proof
in Appendix B.

\subsection{Construction of a weight function}
We first prepare a suitable weight function,
which will be used for weighted energy estimates of the solution.
In the previous works \cite{Ik05IJPAM, IkToYo09, Ni10, LiNiZh12, Wa12} for the case $\alpha \ge 0$,
the so-called Ikehata-Todorova-Yordanov type weight function
\begin{align}
\label{2_itypsi}
	e^{\psi(t,x)}\quad \mbox{with}\quad
		\psi(t,x) = \mu \frac{\langle x \rangle^{2-\alpha}}{1+t}, \ \mu>0
\end{align}
was used.
The function
$\psi (t,x)$ has the properties
\begin{align}%
\label{2_itypsi_1}
	-\psi_t(t,x) a(x) &= (2+\delta) |\nabla \psi(t,x)|^2,\\
\label{2_itypsi_2}
	\Delta \psi(t,x) &\ge \left( \frac{N-\alpha}{2(2-\alpha)} - \delta \right) \frac{a(x)}{1+t} 
\end{align}%
with some small $\delta >0$,
and these properties are essential for the weighted energy estimates.
However, if
$\alpha < 0$,
then the above weight function $\psi(t,x)$ does not satisfy the estimate \eqref{2_itypsi_2}.

Therefore, following the idea of \cite{SoWa17},
we modify the weight function \eqref{2_itypsi} as follows.
\begin{definition}\label{2_def_psi}
Let
$\delta \in (0,1)$,
$\mu = \frac{a_0}{(2-\alpha)^2(2+\delta)}$,
and
$A_0 > 0$.
Let
$R_{\delta}>0$ be a sufficiently large constant depending on $\delta > 0$, and
we take a cut-off function
$\eta_{R_{\delta}} \in C_0^{\infty}(\mathbb{R}^N)$
such that
\begin{align*}%
	\eta_{R_{\delta}}(x) = 1 \quad (|x| \le R_{\delta}),
	\quad \eta_{R_{\delta}}(x) = 0 \quad (|x| \ge 2R_{\delta}).
\end{align*}%
Let $\mathcal{N}$ be the Newton potential, that is,
\begin{align*}%
	\mathcal{N}(x) =
	\begin{cases}
	\displaystyle \frac{|x|}{2} &(N=1),\\[8pt]
	\displaystyle \frac{1}{2\pi} \log \frac{1}{|x|} &(N=2),\\[8pt]
	\displaystyle \frac{\Gamma(\frac{N}{2}+1)}{N(N-2) \pi^{N/2}} |x|^{2-N} &(N\ge 3).
	\end{cases}
\end{align*}%
We define
\begin{align*}%
	\psi(t,x) &= \frac{\mu}{1+t}
		\left\{ \langle x \rangle^{2-\alpha} + A_0
			- \mathcal{N}\ast \left(\alpha (2-\alpha) \langle x \rangle^{-2-\alpha} \eta_{R_{\delta}}(x) \right)
				\right\}.
\end{align*}%
\end{definition}
\begin{lemma}\label{2_lem_psi}
For any
$\delta \in (0,1)$,
there exist constants
$R_{\delta}>0$ and $A_0 > 0$ such that
the function $\psi(t,x)$ defined by Definition \ref{2_def_psi} satisfies
\begin{align}%
\label{2_sowapsi_1}
	-\psi_t(t,x) a(x) &\ge (2+\delta_1) |\nabla \psi(t,x)|^2,\\
\label{2_sowapsi_2}
	\Delta \psi(t,x) &\ge
		\left( \frac{N-\alpha}{2(2-\alpha)} - \delta_2 \right) \frac{a(x)}{1+t},
\end{align}%
where
$\delta_1 = \frac{2}{3}\delta$
and
$\delta_2 = \frac{N-\alpha}{2(2-\alpha)}\delta$. 
\end{lemma}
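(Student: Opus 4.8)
The plan is to exploit the product structure $\psi(t,x)=\frac{\mu}{1+t}\phi(x)$ with $\phi(x)=\langle x \rangle^{2-\alpha}+A_0-\mathcal{N}\ast g$ and $g:=\alpha(2-\alpha)\langle x \rangle^{-2-\alpha}\eta_{R_{\delta}}$, so that both desired inequalities become \emph{time-independent} statements about $\phi$. First I would compute the Laplacian of the radial profile explicitly; using $|x|^2=\langle x \rangle^2-1$ one finds
\begin{align*}
	\Delta \langle x \rangle^{2-\alpha}
	=(N-\alpha)(2-\alpha)\langle x \rangle^{-\alpha}
	+\alpha(2-\alpha)\langle x \rangle^{-2-\alpha}.
\end{align*}
The first term is the ``good'' one that will produce the target constant $\frac{N-\alpha}{2(2-\alpha)}$; the second is the obstruction, since for $\alpha<0$ it is negative and destroys the lower bound \eqref{2_sowapsi_2} (this is precisely the failure of \eqref{2_itypsi_2} noted before the definition). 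The role of the convolution term is to remove it: the term $-\mathcal{N}\ast g$ is constructed exactly so that $\Delta(-\mathcal{N}\ast g)=-g$, and since $\eta_{R_{\delta}}\equiv1$ on $\{|x|\le R_{\delta}\}$ this cancels the bad term there, leaving the clean identity $\Delta\phi=(N-\alpha)(2-\alpha)\langle x \rangle^{-\alpha}$ on that ball.

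Granting this, I would prove \eqref{2_sowapsi_2} as follows. Since $\Delta\psi=\frac{\mu}{1+t}\Delta\phi$ and $\mu=\frac{a_0}{(2-\alpha)^2(2+\delta)}$, the identity on $\{|x|\le R_{\delta}\}$ gives $\Delta\psi=\frac{N-\alpha}{(2-\alpha)(2+\delta)}\frac{a(x)}{1+t}=\frac{N-\alpha}{2(2-\alpha)}\cdot\frac{2}{2+\delta}\cdot\frac{a(x)}{1+t}$. Because $\frac{2}{2+\delta}=1-\frac{\delta}{2+\delta}>1-\delta$, this already beats the required coefficient $\frac{N-\alpha}{2(2-\alpha)}(1-\delta)=\frac{N-\alpha}{2(2-\alpha)}-\delta_2$ with room to spare. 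On $\{|x|>R_{\delta}\}$ the leftover is $\alpha(2-\alpha)\langle x \rangle^{-2-\alpha}(1-\eta_{R_{\delta}})$, whose size relative to the good term is at most $|\alpha|\langle x \rangle^{-2}\le|\alpha|(1+R_{\delta}^2)^{-1}$; choosing $R_{\delta}$ large makes this relative error small enough that the factor $\frac{2}{2+\delta}$ still dominates $1-\delta$. This pins down the choice of $R_{\delta}$.

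For \eqref{2_sowapsi_1}, using $\psi_t=-\frac{\mu}{(1+t)^2}\phi$ and $\nabla\psi=\frac{\mu}{1+t}\nabla\phi$, the inequality is equivalent to the static bound $\phi\,\langle x \rangle^{-\alpha}\ge\frac{2+\delta_1}{(2-\alpha)^2(2+\delta)}\,|\nabla\phi|^2$. With $\nabla\langle x \rangle^{2-\alpha}=(2-\alpha)\langle x \rangle^{-\alpha}x$, the leading parts reduce this to $\langle x \rangle^{-2\alpha}(1+|x|^2)\ge\frac{2+\delta_1}{2+\delta}\langle x \rangle^{-2\alpha}|x|^2$, i.e. $1+|x|^2\ge\frac{2+\delta_1}{2+\delta}|x|^2$, which holds with a margin of at least $1$ precisely because $\delta_1=\frac{2}{3}\delta<\delta$. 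It then remains to absorb the corrections coming from $A_0$ and from $\Phi_1:=-\mathcal{N}\ast g$: in $\nabla\phi$ the extra gradient $\nabla\Phi_1$ and the cross term are of lower order at infinity (they grow or decay strictly slower than the main terms, at a rate depending on $N$), while $A_0\langle x \rangle^{-\alpha}\ge A_0$ provides a constant buffer on the bounded region. Having already fixed $R_{\delta}$, I would finally take $A_0$ large enough to close the remaining gap uniformly in $x$.

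I expect the main obstacle to be this last uniform control of the errors produced by the cutoff $\eta_{R_{\delta}}$ and the convolution $\mathcal{N}\ast g$ --- in particular verifying that $\Phi_1$ and $\nabla\Phi_1$ are genuinely subordinate to $\langle x \rangle^{2-\alpha}$ and $\langle x \rangle^{1-\alpha}$ respectively in every dimension (where $\mathcal{N}$ behaves like $|x|$, $\log|x|$, or $|x|^{2-N}$), and in keeping the two constant choices non-circular: $R_{\delta}$ must be selected first, from \eqref{2_sowapsi_2} together with the far field of \eqref{2_sowapsi_1}, and only afterwards $A_0$, from the near-field part of \eqref{2_sowapsi_1}.
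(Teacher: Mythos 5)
Your proposal follows essentially the same route as the paper's proof: the cutoff Newton-potential term cancels the bad contribution $\alpha(2-\alpha)\langle x\rangle^{-2-\alpha}$ of $\Delta\langle x\rangle^{2-\alpha}$ on $\{|x|\le R_\delta\}$, leaving a factor $(1-\eta_{R_\delta})$ handled by taking $R_\delta$ large, while for \eqref{2_sowapsi_1} the paper's constant $\delta_3$ with $1-\delta_3\ge\frac{2+\delta_1}{2+\delta}$ plays exactly the role of your leading-term margin, and the convolution corrections are absorbed, via the same bounds $|\mathcal{N}\ast g|\lesssim\langle x\rangle^{2-N}$ (resp.\ $\log$) and $|\nabla\mathcal{N}\ast g|\lesssim\langle x\rangle^{1-N}$, by choosing $A_0$ large after $R_\delta$ is fixed. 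The computations you outline (the static reduction, the estimate $\frac{2}{2+\delta}>1-\delta$, and the non-circular order $R_\delta$ then $A_0$) are correct and coincide with the paper's argument.
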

\begin{proof}
We calculate
\begin{align*}%
	\Delta \psi(t,x) &= 
		\mu (N-\alpha)(2-\alpha) \frac{\langle x \rangle^{-\alpha}}{1+t}
		+ \mu \alpha(2-\alpha) \frac{\langle x \rangle^{-2-\alpha}}{1+t} (1-\eta_{R_{\delta}}(x)) \\
	&\ge \left( \frac{N-\alpha}{2(2-\alpha)} - \delta_2 \right) \frac{a(x)}{1+t}
\end{align*}%
for sufficiently large
$R_{\delta}>0$.
Thus, we have \eqref{2_sowapsi_2}.
Next, we prove \eqref{2_sowapsi_1}.
We compute
\begin{align*}%
	&-\psi_t(t,x) a(x)\\
	&= \frac{1}{1+t} \psi(t,x) a(x) \\
	&= \frac{a_0}{(2-\alpha)^2 (2+\delta)} \frac{a(x)}{(1+t)^{2}}
		\left\{ \langle x \rangle^{2-\alpha} + A_0
			- \mathcal{N} \ast \left( \alpha(2-\alpha)\langle x \rangle^{-2-\alpha} \eta_{R_{\delta}}(x)
				 \right) \right\}.
\end{align*}%
On the other hand, we have
\begin{align*}%
	&(2+\delta_1) |\nabla \psi(t,x)|^2 \\
	&= (2+\delta_1) \frac{a_0^2}{(2-\alpha)^2(2+\delta)^2}
		\frac{\langle x \rangle^{-2\alpha}|x|^2}{(1+t)^{2}} \\
	&\quad - 2 (2+\delta_1)
		\frac{a_0^2}{(2-\alpha)^3(2+\delta)^2} \frac{\langle x \rangle^{-\alpha}x}{(1+t)^2}
			\cdot \nabla
			\left[ \mathcal{N} \ast
				\left( \alpha(2-\alpha) \langle x \rangle^{-2-\alpha} \eta_{R_{\delta}}(x) \right)
			\right]\\
	&\quad + (2+\delta_1) \frac{a_0^2}{(2-\alpha)^4(2+\delta)^2} \frac{1}{(1+t)^2}
			\left| \nabla \mathcal{N} \ast
				\left( \alpha(2-\alpha) \langle x \rangle^{-2-\alpha} \eta_{R_{\delta}}(x) \right)
			\right|^2.
\end{align*}%
We easily obtain
\begin{align*}%
	\left| \mathcal{N} \ast
				\left( \alpha(2-\alpha)  \langle x \rangle^{-2-\alpha} \eta_{R_{\delta}}(x) \right) \right|
		&\le C \begin{cases}
				\langle x \rangle^{2-N}&(N=1, N\ge 3),\\
				\log(2+|x|)&(N=2),
		\end{cases}\\
	\left| \nabla \mathcal{N} \ast
				\left( \alpha(2-\alpha) \langle x \rangle^{-2-\alpha} \eta_{R_{\delta}}(x) \right)
			\right|
		&\le C \langle x \rangle^{1-N}.
\end{align*}%
Now, we take a constant
$\delta_{3} > 0$
so that
$1-\delta_3 \ge \frac{2+\delta_1}{2+\delta}$
holds.
Then, we have
\begin{align*}%
	(1-\delta_3) \frac{a_0}{(2-\alpha)^2(2+\delta)}
		\frac{\langle x \rangle^{2-\alpha}a(x)}{(1+t)^2}
		\ge
		(2+\delta_1) \frac{a_0^2}{(2-\alpha)^2(2+\delta)^2}
			\frac{\langle x \rangle^{-2\alpha}|x|^2}{(1+t)^2}.
\end{align*}%
Finally, we take a sufficiently large constant $A_0 > 0$ so that
\begin{align*}%
	&\frac{a_0}{(2-\alpha)^2(2+\delta)}
		\frac{\delta_3 \langle x \rangle^{2-\alpha} a(x) + A_0 a(x)}{(1+t)^2} \\
	&\quad \ge
		\frac{a_0}{(2-\alpha)^2(2+\delta)} \frac{a(x)}{(1+t)^2}
			\left| \mathcal{N} \ast
				\left( \alpha(2-\alpha) \eta_{R_{\delta}}(x) \langle x \rangle^{-2-\alpha}\right)
			\right|\\
	&\quad +\left| 2(2+\delta_1) \frac{a_0^2}{(2-\alpha)^3(2+\delta)^2}
			\frac{\langle x \rangle^{-\alpha}x}{(1+t)^2}
				\cdot \nabla
				\left[ \mathcal{N} \ast
				\left( \alpha(2-\alpha) \eta_{R_{\delta}}(x) \langle x \rangle^{-2-\alpha}\right)
			\right] \right| \\
	&\qquad	+ (2+\delta_1) \frac{a_0^2}{(2-\alpha)^4(2+\delta)^2} \frac{1}{(1+t)^2}
			\left| \nabla \mathcal{N} \ast
				\left( \alpha(2-\alpha) \eta_{R_{\delta}}(x) \langle x \rangle^{-2-\alpha} \right)
			\right|^2.
\end{align*}%
Consequently, we have \eqref{2_sowapsi_1}.
\end{proof}

\subsection{Weighted energy estimates}
Using the function $\psi(t,x)$ constructed in Definition \ref{2_def_psi},
we prove the following weighted energy estimates for solutions to \eqref{dw}.
Continuing from the previous subsection, we consider the case
$\alpha < 0$ and $\beta = 0$.
Let
$\delta_0 \in (0,\frac{N-\alpha}{2-\alpha})$.
We define
\begin{align}%
\label{mt}
	M(t) &:= \sup_{0\le \tau <t}
		\left\{
		(1+\tau)^{\frac{N-\alpha}{2-\alpha} + 1 - \delta_0}
		\int_{\mathbb{R}^N} e^{2\psi} (u_t^2 + |\nabla u|^2 )\,dx \right.\\
\notag
		&\qquad \qquad \qquad\left.
		+(1+\tau)^{\frac{N-\alpha}{2-\alpha} - \delta_0}
		\int_{\mathbb{R}^N} e^{2\psi} a(x) u^2 \,dx
		\right\}.
\end{align}%

By the blow-up alternative in Proposition \ref{prop_lwp},
Theorem \ref{thm_gwp} is obtained from the following a priori estimate.

\begin{proposition}\label{2_prop_ap}
Under the assumptions of Theorem \ref{thm_gwp},
there exist
$\delta_0 \in (0, \frac{N-\alpha}{2-\alpha})$,
and constants
$t_0 = t_0(a_0, R_0, \delta_0) \ge 1$, $C = C(N,\alpha, a_0, p,R_0,\delta_0,t_0) >0$
such that
the solution $u$ constructed in Proposition \ref{prop_lwp} satisfies the a priori estimate
\begin{align*}%
	M(t) \le CM(0) + CM(t)^{(p+1)/2}
\end{align*}%
for $t \in [0,T(\varepsilon))$,
where the function
$\psi(t,x)$ is defined in Definition \ref{2_def_psi} with
$\delta = \frac{2-\alpha}{2(N-\alpha)}\delta_0$.
\end{proposition}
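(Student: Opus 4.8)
The plan is to carry out a weighted energy estimate driven by the two multipliers $e^{2\psi}u_t$ and $e^{2\psi}u$, to combine the resulting identities with time weights matched to the parabolic rate $\frac{N-\alpha}{2-\alpha}$, and to close the nonlinear contribution by a special case of the Caffarelli--Kohn--Nirenberg inequality. First I would multiply $u_{tt}-\Delta u+a(x)u_t=|u|^p$ by $e^{2\psi}u_t$ and integrate by parts in the $-\Delta u$ term to obtain
\begin{align*}
 &\frac12\frac{d}{dt}\int_{\mathbb{R}^N}e^{2\psi}(u_t^2+|\nabla u|^2)\,dx
 +\int_{\mathbb{R}^N}(-\psi_t)e^{2\psi}(u_t^2+|\nabla u|^2)\,dx\\
 &\qquad+\int_{\mathbb{R}^N}e^{2\psi}a\,u_t^2\,dx
 +2\int_{\mathbb{R}^N}e^{2\psi}u_t\,\nabla\psi\cdot\nabla u\,dx
 =\int_{\mathbb{R}^N}e^{2\psi}|u|^p u_t\,dx.
\end{align*}
The only term of indefinite sign is the last one on the left; using Young's inequality $2|u_t||\nabla\psi||\nabla u|\le a\,u_t^2+a^{-1}|\nabla\psi|^2|\nabla u|^2$ together with $a^{-1}|\nabla\psi|^2\le(2+\delta_1)^{-1}(-\psi_t)$, which is exactly \eqref{2_sowapsi_1}, I can absorb it into $\int e^{2\psi}a\,u_t^2\,dx$ and a fraction of $\int(-\psi_t)e^{2\psi}|\nabla u|^2\,dx$, leaving a dissipative lower bound for the left-hand side.

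Next I would multiply by $e^{2\psi}u$ and use $2\int e^{2\psi}u\,\nabla\psi\cdot\nabla u\,dx=-\int e^{2\psi}(2|\nabla\psi|^2+\Delta\psi)u^2\,dx$; here \eqref{2_sowapsi_2} produces the positive potential term $\int e^{2\psi}\frac{a}{1+t}u^2\,dx$, at the cost of a $+\int e^{2\psi}u_t^2\,dx$ and a cross term $\int\psi_t e^{2\psi}u u_t\,dx$ that I would split by Young. I would then form the functional
\begin{align*}
 F(t)=(1+t)^{\frac{N-\alpha}{2-\alpha}+1-\delta_0}\!\int_{\mathbb{R}^N}\!e^{2\psi}(u_t^2+|\nabla u|^2)\,dx
 +(1+t)^{\frac{N-\alpha}{2-\alpha}-\delta_0}\!\int_{\mathbb{R}^N}\!e^{2\psi}a\,u^2\,dx,
\end{align*}
possibly augmented by a small multiple of $(1+t)^{\frac{N-\alpha}{2-\alpha}-\delta_0}\int e^{2\psi}u u_t\,dx$ to make the quadratic form positive definite. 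The choice $\delta=\frac{2-\alpha}{2(N-\alpha)}\delta_0$ makes the coefficient $\frac{N-\alpha}{2(2-\alpha)}-\delta_2$ in \eqref{2_sowapsi_2} match, after the combination, the decay exponent $\frac{N-\alpha}{2-\alpha}-\delta_0$: when the derivatives of the time weights are distributed over the dissipative terms, the $(1+\tau)^{-1}$ factors coming from $-\psi_t\gtrsim(1+t)^{-2}$ and from $\Delta\psi\gtrsim a/(1+t)$ cancel, so that for $t\ge t_0$ the linear part of $\frac{d}{dt}F$ is nonpositive. Integrating then gives $F(t)\le CF(0)+(\text{nonlinear})$, with the short interval $[0,t_0]$ absorbed into $CM(0)$ by the local theory, and $M(t)=\sup_{0\le\tau<t}F(\tau)$.

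The crux is the nonlinear estimate. By Hölder's inequality the space integrals reduce to weighted Lebesgue norms of $v:=e^{\psi}u$, since $\int e^{2\psi}|u|^{p+1}\,dx\le\|v\|_{L^{p+1}}^{p+1}$ and $\int e^{2\psi}|u|^{2p}\,dx\le\|v\|_{L^{2p}}^{2p}$ (using $\psi\ge0$ and $p>1$). I would then invoke the Caffarelli--Kohn--Nirenberg inequality to interpolate these norms between the weighted $L^2$ norm $\|\langle x\rangle^{-\alpha/2}v\|_{L^2}^2\approx\int e^{2\psi}a\,u^2\,dx$ and $\|\nabla v\|_{L^2}^2\lesssim\int e^{2\psi}(|\nabla u|^2+|\nabla\psi|^2u^2)\,dx$, the latter being controlled, via \eqref{2_sowapsi_1} once more, by the two quantities defining $M(t)$. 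Substituting the decay rates encoded in $M(t)$ turns each nonlinear space integral into $M(t)^{(p+1)/2}$ times a fixed power of $(1+\tau)$, and the supercriticality $p>1+\frac{2}{N-\alpha}$ is precisely the condition making this power of $(1+\tau)$ integrable on $[0,\infty)$ uniformly in $t$. Taking the supremum over $\tau<t$ then yields $M(t)\le CM(0)+CM(t)^{(p+1)/2}$.

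I expect the main difficulty to be twofold. First, the precise bookkeeping that renders the time-weighted combination dissipative with the sharp rate $\frac{N-\alpha}{2-\alpha}$: this is where the exact constants $\mu,\delta,R_\delta,A_0$ of Definition \ref{2_def_psi} and the smallness of $\delta_0$ must be used, and one has to check that the indefinite $u u_t$ and $\psi_t u u_t$ terms are genuinely subordinate after multiplication by the weights. Second, the passage from $\|v\|_{L^{2p}},\|v\|_{L^{p+1}}$ to $M(t)$ through Caffarelli--Kohn--Nirenberg, where one must verify that the admissible exponent range of the inequality (its weight power being governed by $\alpha<0$) is compatible with the restriction $p\le\frac{N}{N-2}$ in \eqref{supercritical}, and that the term $\int e^{2\psi}|\nabla\psi|^2u^2\,dx$ appearing in $\|\nabla v\|_{L^2}^2$ does not lose the exponential weight, which requires an elementary bound of the form $\langle x\rangle^{2-\alpha}e^{2\psi}\le C(1+t)e^{(2+\epsilon)\psi}$.
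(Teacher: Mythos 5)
Your overall strategy is the same as the paper's (multipliers $e^{2\psi}u_t$ and $e^{2\psi}u$, time weights tuned to the rate $\tfrac{N-\alpha}{2-\alpha}$, then Caffarelli--Kohn--Nirenberg to close the nonlinearity), but there is a genuine sign error at the heart of the second-multiplier step, and it is exactly the step the whole linear estimate hinges on. Your identity
\begin{align*}
 2\int_{\mathbb{R}^N} e^{2\psi}u\,\nabla\psi\cdot\nabla u\,dx
 =-\int_{\mathbb{R}^N} e^{2\psi}\bigl(2|\nabla\psi|^2+\Delta\psi\bigr)u^2\,dx
\end{align*}
is correct, but since the cross term enters the energy identity with a $+$ sign, substituting it makes $\Delta\psi\,u^2$ appear with a \emph{minus} sign; by \eqref{2_sowapsi_2} this is a harmful term $\lesssim -c\,\tfrac{a(x)}{1+t}u^2$, not the positive potential term you claim. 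This cannot be repaired by adjusting constants: in the region $\langle x\rangle^{2-\alpha}\ll 1+t$ one has $(-\psi_t)a\,u^2=\tfrac{\psi}{1+t}a\,u^2$ with $\psi\to 0$ and $|\nabla\psi|^2u^2\approx\tfrac{-\psi_t a}{2+\delta}u^2$, both strictly smaller than $\Delta\psi\,u^2\sim\tfrac{a}{1+t}u^2$, so after your substitution the zero-order quadratic form is genuinely negative there. The paper avoids this with an asymmetric split: it writes
\begin{align*}
 2e^{2\psi}u\nabla\psi\cdot\nabla u
 =4e^{2\psi}u\nabla\psi\cdot\nabla u-\nabla\cdot\bigl(e^{2\psi}u^2\nabla\psi\bigr)
 +2e^{2\psi}|\nabla\psi|^2u^2+e^{2\psi}(\Delta\psi)u^2,
\end{align*}
keeping the \emph{doubled} cross term $4u\nabla\psi\cdot\nabla u$, which is then absorbed by Schwarz into $|\nabla u|^2$ and the $|\nabla\psi|^2u^2$ terms supplied by \eqref{2_sowapsi_1}, while retaining $+\Delta\psi\,u^2$. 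That positive term is indispensable: when the identity is multiplied by $(t_0+t)^{\frac{N-\alpha}{2-\alpha}-\delta_0}$, the weight-derivative term $-\bigl(\tfrac{N-\alpha}{2-\alpha}-\delta_0\bigr)(t_0+t)^{-1}\int e^{2\psi}\tfrac{a}{2}u^2\,dx$ must be dominated by $\bigl(\tfrac{N-\alpha}{2(2-\alpha)}-\delta_2\bigr)(t_0+t)^{-1}\int e^{2\psi}a\,u^2\,dx$, and the margin is exactly $\delta_2>0$ because $\delta=\tfrac{2-\alpha}{2(N-\alpha)}\delta_0$ gives $4\delta_2=\delta_0$. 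With your sign, no such domination is available, the functional cannot carry the weight $(t_0+t)^{\frac{N-\alpha}{2-\alpha}-\delta_0}$, and the definition of $M(t)$ and the closing of the nonlinear estimate collapse.

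Two secondary remarks. First, in the $e^{2\psi}u_t$ step you spend the full dissipation $\int e^{2\psi}a\,u_t^2\,dx$ in Young's inequality; a fraction of it must survive, since the second identity produces $+u_t^2$ and $\psi_t u u_t$ terms with the wrong sign that are absorbed (after the $\nu$-combination, with $t_0$ large so that $(t_0+t)a(x)\gg1$) precisely by the leftover $a\,u_t^2$ dissipation at the higher time weight. Second, for the right-hand side $\int e^{2\psi}|u|^pu_t\,dx$ the paper writes $e^{2\psi}|u|^pu_t=\partial_t\bigl[e^{2\psi}F(u)\bigr]-2\psi_te^{2\psi}F(u)$ with $F(u)=\int_0^u|v|^p\,dv$, which yields only $L^{p+1}$-type nonlinear terms; your Cauchy--Schwarz route producing $\|e^{\psi}u\|_{L^{2p}}^p$ is workable in principle but would again tax the $u_t^2$ dissipation you have already used up. Your nonlinear interpolation sketch and the bound $\langle x\rangle^{2-\alpha}e^{2\psi}\le C(1+t)e^{(2+\epsilon)\psi}$ are in the spirit of the paper's Lemmas \ref{2_lem_int}--\ref{2_lem_int2} and are fine, as is the observation that $p>1+\tfrac{2}{N-\alpha}$ with $\delta_0$ small is what makes the time powers summable.
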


\begin{remark}\label{rem_p}
The restriction
$p \le \frac{N}{N-2} \ (N\ge 3)$
is due to the local existence (Proposition \ref{prop_lwp}),
and we can obtain the above a priori estimate for
$1+\frac{2}{N-\alpha} < p \le \frac{N+2}{N-2}$.
\end{remark}

To prove Proposition \ref{2_prop_ap},
we first prove the following energy estimate.
After that, in the next subsection,
we give the nonlinear estimates and complete the proof of Proposition \ref{2_prop_ap}.

\begin{lemma}\label{2_lem_en}
Under the assumptions of Proposition \ref{2_prop_ap},
for any
$\delta_0 \in (0, \frac{N-\alpha}{2-\alpha})$,
there exist constants
$t_0 = t_0(N, \alpha, a_0, R_0, \delta_0) \ge 1$, and $C = C(N,\alpha,a_0, p,R_0,\delta_0,t_0) >0$
such that
the solution $u$ constructed in Proposition \ref{prop_lwp} satisfies
\begin{align*}%
	&(t_0+t)^{\frac{N-\alpha}{2-\alpha}+1-\delta_0}
	\int_{\mathbb{R}^N} e^{2\psi} (u_t^2 + |\nabla u|^2 )\,dx
	+(t_0 + t)^{\frac{N-\alpha}{2-\alpha} - \delta_0}
	\int_{\mathbb{R}^N} e^{2\psi} a(x) u^2 \,dx\\
	&\le
	C \int_{\mathbb{R}^N} (u_1^2 + |\nabla u_0|^2 + u_0^2 + |u_0|^{p+1} )\,dx\\
	&\quad + C (t_0+t)^{\frac{N-\alpha}{2-\alpha}+1-\delta_0}
		\int_{\mathbb{R}^N} e^{2\psi} | F(u) | \,dx\\
	&\quad + C \int_0^t (t_0 + \tau)^{\frac{N-\alpha}{2-\alpha}-\delta_0}
		\int_{\mathbb{R}^N} e^{2\psi} |F(u)|\,dx d\tau\\
	&\quad + C \int_0^t (t_0 + \tau)^{\frac{N-\alpha}{2-\alpha}+1-\delta_0}
		\int_{\mathbb{R}^N} e^{2\psi}(-\psi_t) |F(u)| \,dx d\tau,
\end{align*}%
where
$F(u) = \int_0^u |v|^p \,dv$,
and
the function
$\psi(t,x)$ is defined in Definition \ref{2_def_psi} with
$\delta = \frac{2-\alpha}{2(N-\alpha)}\delta_0$.
\end{lemma}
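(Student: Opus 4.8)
The plan is to run a weighted energy estimate based on the two multipliers $e^{2\psi}u_t$ and $e^{2\psi}u$, combined with the time weights $(t_0+t)^{\gamma+1-\delta_0}$ and $(t_0+t)^{\gamma-\delta_0}$, where I abbreviate $\gamma:=\frac{N-\alpha}{2-\alpha}$. Two structural facts drive everything: the inequalities \eqref{2_sowapsi_1}--\eqref{2_sowapsi_2} of Lemma~\ref{2_lem_psi}, and the bound $a(x)=a_0\langle x\rangle^{-\alpha}\ge a_0>0$, which holds precisely because $\alpha<0$ and supplies a \emph{non-degenerate} dissipation of $u_t$. Testing \eqref{dw} (with $\beta=0$) against $e^{2\psi}u_t$ and using $e^{2\psi}u_t|u|^p=\partial_t(e^{2\psi}F(u))+2e^{2\psi}(-\psi_t)F(u)$, an integration by parts gives
\begin{multline*}
\frac{d}{dt}\Big[\tfrac12\int_{\mathbb{R}^N}e^{2\psi}(u_t^2+|\nabla u|^2)\,dx-\int_{\mathbb{R}^N}e^{2\psi}F(u)\,dx\Big]
+\int_{\mathbb{R}^N}e^{2\psi}(-\psi_t)(u_t^2+|\nabla u|^2)\,dx\\
+\int_{\mathbb{R}^N}e^{2\psi}a(x) u_t^2\,dx
+2\int_{\mathbb{R}^N}e^{2\psi}u_t\nabla\psi\cdot\nabla u\,dx
=2\int_{\mathbb{R}^N}e^{2\psi}(-\psi_t)F(u)\,dx.
\end{multline*}
I would bound the cross term by Young's inequality, $2|u_t\nabla\psi\cdot\nabla u|\le\lambda a(x) u_t^2+(\lambda a(x))^{-1}|\nabla\psi|^2|\nabla u|^2$, together with \eqref{2_sowapsi_1} in the form $|\nabla\psi|^2/a(x)\le(-\psi_t)/(2+\delta_1)$; choosing $\lambda\in(\frac{1}{2+\delta_1},1)$ absorbs it into the damping integral and a fraction of the gradient dissipation while leaving a genuine remainder $c_0\int e^{2\psi}a(x) u_t^2\ge c_0 a_0\int e^{2\psi}u_t^2$, which is what ultimately controls the kinetic part of the energy.

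Next I would test against $e^{2\psi}u$. Here the integration by parts $2\int e^{2\psi}u\,\nabla\psi\cdot\nabla u=-\int e^{2\psi}(\Delta\psi+2|\nabla\psi|^2)u^2$ brings in $\Delta\psi$, and after rearranging one obtains
\begin{multline*}
\frac{d}{dt}\Big[\int_{\mathbb{R}^N}e^{2\psi}u u_t\,dx+\tfrac12\int_{\mathbb{R}^N}e^{2\psi}a(x) u^2\,dx\Big]
+\int_{\mathbb{R}^N}e^{2\psi}|\nabla u|^2\,dx
+\int_{\mathbb{R}^N}e^{2\psi}(-\psi_t)a(x) u^2\,dx\\
=\int_{\mathbb{R}^N}e^{2\psi}u_t^2\,dx
+2\int_{\mathbb{R}^N}e^{2\psi}\psi_t u u_t\,dx
+\int_{\mathbb{R}^N}e^{2\psi}(\Delta\psi+2|\nabla\psi|^2)u^2\,dx
+\int_{\mathbb{R}^N}e^{2\psi}u|u|^p\,dx.
\end{multline*}
The point of this identity is the coercive gradient term on the left: completing the square in $0\le\int e^{2\psi}|\nabla u+u\nabla\psi|^2$ yields $\int e^{2\psi}|\nabla u|^2\ge\int e^{2\psi}\Delta\psi\,u^2$, so by \eqref{2_sowapsi_2} we get $\int e^{2\psi}|\nabla u|^2\ge(\frac{\gamma}{2}-\delta_2)\frac{1}{1+t}\int e^{2\psi}a(x) u^2$, converting control of the gradient into decay of the potential energy $\int e^{2\psi}a(x) u^2$ at the parabolic rate $(1+t)^{-1}$.

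I would then multiply the first identity by $(t_0+t)^{\gamma+1-\delta_0}$ and the second by $\nu(t_0+t)^{\gamma-\delta_0}$ with a fixed $\nu>0$, add, and integrate over $[0,t]$. The terms created by differentiating the time weights are absorbed: the $\int e^{2\psi}u_t^2$ contribution by the non-degenerate damping (for $t_0$ large, since $a(x)\ge a_0$), and the $\int e^{2\psi}|\nabla u|^2$ contribution partly by the far-field dissipation $\int e^{2\psi}(-\psi_t)|\nabla u|^2$ (where $-\psi_t\gtrsim(1+t)^{-1}$) and partly by the coercive gradient term of the second identity. The source terms $\int e^{2\psi}(\Delta\psi+2|\nabla\psi|^2)u^2$ and the indefinite virial terms are controlled using the coercivity above, $|\nabla\psi|^2\le\frac{1}{2+\delta_1}(-\psi_t)a(x)$, $|\int e^{2\psi}u u_t|\le\frac12\int e^{2\psi}u_t^2+\frac{1}{2a_0}\int e^{2\psi}a(x) u^2$, and the smallness gained from the extra factor $(t_0+t)^{-1}$ for $t_0$ large. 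The boundary term at $\tau=t$ then reproduces the left-hand side of the lemma minus $(t_0+t)^{\gamma+1-\delta_0}\int e^{2\psi}|F(u)|$; the boundary term at $\tau=0$ is dominated by $\int(u_1^2+|\nabla u_0|^2+u_0^2+|u_0|^{p+1})$, since $\psi(0,\cdot)$ and $a(x)$ are bounded on $\{|x|\le R_0\}$ and $|F(u_0)|\lesssim|u_0|^{p+1}$; and the surviving $\tau$-integrals, after $|u|^{p+1}\simeq|F(u)|$ in $\int e^{2\psi}u|u|^p$, produce exactly the three nonlinear integrals in the statement.

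The step I expect to be the main obstacle is this last combination, specifically the bookkeeping of constants in the absorptions. The first multiplier's gradient dissipation $\int e^{2\psi}(-\psi_t)|\nabla u|^2$ degenerates near $x=0$, so the coercive gradient term of the second identity must simultaneously absorb the near-field part of the weight-derivative term and, through the coercivity inequality, generate the decay of $\int e^{2\psi}a(x) u^2$; since the coercivity supplies only the factor $\frac{\gamma}{2}-\delta_2$ while the weight derivatives demand a factor close to $\gamma$, the argument has to be localized into a far-field region (handled by $\int e^{2\psi}(-\psi_t)|\nabla u|^2$) and a near-field region (handled by coercivity) so as not to count the same gradient term twice. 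It is exactly this balance, together with \eqref{2_sowapsi_2} and the calibration $\delta=\frac{2-\alpha}{2(N-\alpha)}\delta_0$ (hence $\delta_2=\frac{\delta_0}{4}$), that fixes the rate $\gamma=\frac{N-\alpha}{2-\alpha}$, forces $\delta_0\in(0,\gamma)$, and requires $t_0$ to be chosen large.
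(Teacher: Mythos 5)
Your first multiplier identity (with $e^{2\psi}u_t$) and its treatment via Young's inequality and \eqref{2_sowapsi_1} are fine and essentially equivalent to the paper's perfect-square computation, and your overall architecture (time weights $(t_0+t)^{\frac{N-\alpha}{2-\alpha}+1-\delta_0}$ and $\nu(t_0+t)^{\frac{N-\alpha}{2-\alpha}-\delta_0}$, combination with small $\nu$, handling of data and of $|u|^pu\lesssim|F(u)|$) matches the paper. The genuine gap is in the second multiplier step, and it is structural, not a matter of bookkeeping. In your identity the term $\int e^{2\psi}(\Delta\psi+2|\nabla\psi|^2)u^2$ sits on the \emph{bad} side, because you integrated $2\int e^{2\psi}u\,\nabla\psi\cdot\nabla u$ fully by parts. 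Your proposed remedy, the coercivity $\int e^{2\psi}|\nabla u|^2\ge\int e^{2\psi}(\Delta\psi+|\nabla\psi|^2)u^2$, cannot do double duty: splitting $\int e^{2\psi}|\nabla u|^2$ and applying coercivity to a fraction $\theta\le 1$ leaves the net coefficient $\theta-1\le 0$ in front of $\int e^{2\psi}\Delta\psi\,u^2$, so the only admissible choice is $\theta=1$, which consumes the entire gradient integral and leaves, after using \eqref{2_sowapsi_1} on the residual $-\int e^{2\psi}|\nabla\psi|^2u^2$, nothing but a fraction of $\int e^{2\psi}(-\psi_t)a u^2$. But $(1+t)(-\psi_t)=\psi\sim\mu\langle x\rangle^{2-\alpha}/(1+t)$ degenerates in the parabolic interior $\langle x\rangle^{2-\alpha}\ll 1+t$, so this leftover dominates neither the decay term $c\,(t_0+t)^{-1}\int e^{2\psi}a u^2$ you need against the time-weight derivative acting on the boundary term $\int e^{2\psi}\frac{a}{2}u^2$, nor any residual $\int e^{2\psi}|\nabla u|^2$ needed against the derivative of $(t_0+t)^{\frac{N-\alpha}{2-\alpha}+1-\delta_0}$ acting on the gradient energy from the first identity. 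Your near-field/far-field localization does not help, precisely because the deficit lives in the near field, where $(-\psi_t)|\nabla u|^2$ and $(-\psi_t)a u^2$ are negligible relative to $\frac{a}{1+t}u^2$ and where no other supply of positivity exists.

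The missing idea is the paper's pointwise rewriting
\begin{align*}
	2 e^{2\psi} u \nabla\psi \cdot \nabla u
	= 4 e^{2\psi} u \nabla\psi \cdot \nabla u
		- \nabla \cdot \left( e^{2\psi} u^2 \nabla \psi \right)
		+ 2 e^{2\psi} |\nabla \psi|^2 u^2 + e^{2\psi} (\Delta \psi) u^2,
\end{align*}
which flips $e^{2\psi}(\Delta\psi)u^2$ onto the good side (the divergence integrates to zero) at the cost of doubling the cross term to $4u\nabla\psi\cdot\nabla u$. This doubled cross term is then absorbed by the quadratic form $|\nabla u|^2+4u\nabla\psi\cdot\nabla u+\bigl((-\psi_t)a(x)+2|\nabla\psi|^2\bigr)u^2$, using \eqref{2_sowapsi_1} with its constant $2+\delta_1>2$ (after reserving $\frac{\delta}{4}(-\psi_t)a u^2$, one gets the coefficient $4+\delta_4$ in front of $|\nabla\psi|^2u^2$), so that \emph{both} a leftover $\delta_5\int e^{2\psi}|\nabla u|^2$ with $\delta_5=\frac{\delta_4}{4+\delta_4}$ \emph{and} the full $\Delta\psi$-term, hence via \eqref{2_sowapsi_2} the decay term $\left(\frac{N-\alpha}{2(2-\alpha)}-\delta_2\right)\frac{a(x)}{1+t}u^2$, survive simultaneously; the calibration $4\delta_2=\delta_0$ then yields the margin $\delta_2>0$ against the weight-derivative term, and $\nu\left(\frac{N-\alpha}{2-\alpha}+1-\delta_0\right)\le\frac{\delta_5}{2}$ closes the combination. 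In short: the two uses of the gradient that you correctly identify as conflicting are reconciled in the paper not by localization but by keeping the computation pointwise (your coercivity inequality is exactly this divergence identity with the square $|\nabla u+2u\nabla\psi|^2$-type structure discarded too early), and without that step the estimate of the lemma does not close.
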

\begin{proof}
Multiplying the equation of \eqref{dw} by $e^{2\psi}u_t$, we have
\begin{align*}%
	&\frac{\partial}{\partial t} \left[ \frac{e^{2\psi}}{2}( u_t^2 + |\nabla u|^2) \right]
	- \nabla\cdot \left( e^{2\psi} u_t \nabla u \right) \\
	&\quad + e^{2\psi} \left( a(x) - \frac{|\nabla \psi|^2}{-\psi_t} - \psi_t \right)u_t^2
		+ \frac{e^{2\psi}}{-\psi_t} | \psi_t \nabla u - u_t \nabla \psi |^2 \\
	&=  \frac{\partial}{\partial t} \left[ e^{2\psi}F(u) \right]
		+ 2 e^{2\psi} (-\psi_t) F(u).
\end{align*}%
By \eqref{2_sowapsi_1},
the last term of the left-hand side satisfies
\begin{align*}%
	\frac{e^{2\psi}}{-\psi_t} | \psi_t \nabla u - u_t \nabla \psi |^2
	\ge e^{2\psi} \left( \frac15 (-\psi_t)|\nabla u|^2 - \frac{a(x)}{4(2+\delta_1)} u_t^2 \right).
\end{align*}%
This and using again \eqref{2_sowapsi_1} imply
\begin{align}%
\label{2_en1}
	&\frac{\partial}{\partial t} \left[ \frac{e^{2\psi}}{2}( u_t^2 + |\nabla u|^2) \right]
	- \nabla\cdot \left( e^{2\psi} u_t \nabla u \right) \\
\notag
	&\quad + e^{2\psi} \left\{  \left( \frac{a(x)}{4} - \psi_t \right)u_t^2 + \frac{-\psi_t}{5} |\nabla u|^2 \right\}\\
\notag
	&\le  \frac{\partial}{\partial t} \left[ e^{2\psi}F(u) \right]
		+ 2 e^{2\psi} (-\psi_t) F(u).
\end{align}%

On the other hand, multiplying the equation of \eqref{dw} by
$e^{2\psi}u$, we have
\begin{align*}%
	 &\frac{\partial}{\partial t} \left[ e^{2\psi} \left(uu_t + \frac{a(x)}{2} u^2 \right) \right]
	 	- \nabla \cdot \left( e^{2\psi} u \nabla u \right)	 \\
	&\quad + e^{2\psi}
		\left\{
			|\nabla u|^2 -\psi_ta(x)u^2 + 2u \nabla\psi\cdot \nabla u - 2 \psi_t uu_t - u_t^2 \right\}\\
	&= e^{2\psi} |u|^pu.
\end{align*}%
By noting
\begin{align*}%
	2 e^{2\psi} u \nabla\psi \cdot \nabla u
	= 4 e^{2\psi} u \nabla\psi \cdot \nabla u
		- \nabla \cdot \left( e^{2\psi} u^2 \nabla \psi \right)
		+ 2 e^{2\psi} |\nabla \psi|^2 u^2 + e^{2\psi} (\Delta \psi) u^2,
\end{align*}%
we see that
\begin{align}%
\label{2_en2}
	&\frac{\partial}{\partial t} \left[ e^{2\psi} \left(uu_t + \frac{a(x)}{2} u^2 \right) \right]
	- \Delta \left( \frac{e^{2\psi}}{2} u^2 \right)\\
\notag
	&\quad
		+ e^{2\psi} \left\{ |\nabla u|^2 +4u\nabla \psi \cdot \nabla u
			+ \left( (-\psi_t)a(x) + 2|\nabla\psi|^2 \right) u^2 \right\} \\
\notag
	&\quad +e^{2\psi} \left( \Delta \psi \right) u^2
		- e^{2\psi} \left( 2\psi_t u u_t + u_t^2 \right)\\
\notag
	&= e^{2\psi} |u|^pu.
\end{align}%
By \eqref{2_sowapsi_1} and the Schwarz inequality, we estimate
\begin{align*}%
	&|\nabla u|^2 +4u\nabla \psi \cdot \nabla u
			+ \left( (-\psi_t)a(x) + 2|\nabla\psi|^2 \right) u^2 \\
	&\ge
		|\nabla u|^2 + 4u\nabla \psi \cdot \nabla u
			+ (4+\delta_4) |\nabla \psi|^2 u^2
			+ \frac{\delta}{4}(-\psi_t) a(x)u^2\\
	&\ge
		\delta_5 |\nabla u|^2 + \frac{\delta}{4}(-\psi_t) a(x)u^2,
\end{align*}%
where
$\delta_4 = \delta_1 - \frac{\delta}{2} - \frac{\delta \delta_1}{4}$
and
$\delta_5 = \frac{\delta_4}{4+\delta_4}$.
Here, we remark that
$\delta \in (0,1)$ and $\delta_1 = \frac23 \delta$ ensure $\delta_4 > 0$.
Also, \eqref{2_sowapsi_2} implies
\begin{align*}%
	e^{2\psi} (\Delta \psi) u^2
	\ge \left( \frac{N-\alpha}{2(2-\alpha)} - \delta_2 \right) e^{2\psi} \frac{a(x)}{1+t} u^2.
\end{align*}%
Moreover, it follows from the Schwarz inequality that
\begin{align*}%
	|2\psi_t u u_t| \le \frac{8}{\delta a(x)} (-\psi_t) u_t^2 + \frac{\delta}{8} a(x) (-\psi_t) u^2.
\end{align*}%
Plugging these into \eqref{2_en2}, we conclude
\begin{align*}%
	& \frac{\partial}{\partial t} \left[ e^{2\psi} \left( uu_t + \frac{a(x)}{2}u^2 \right) \right]
	 	-\Delta \left( \frac{e^{2\psi}}{2} u^2 \right)\\
	&\quad + e^{2\psi} \left( \delta_5 |\nabla u|^2 + \frac{\delta}{8}(-\psi_t) a(x)u^2 \right)
		+ \left( \frac{N-\alpha}{2(2-\alpha)} - \delta_2 \right) e^{2\psi} \frac{a(x)}{1+t} u^2 \\
	&\quad
		- e^{2\psi}\left( 1+ \frac{8(-\psi_t)}{\delta a(x)} \right) u_t^2\\
	&\le e^{2\psi} |u|^p u.
\end{align*}%
Integrating the above over $\mathbb{R}^N$ and
multiplying it by
$(t_0 + t)^{\frac{N-\alpha}{2-\alpha} - \delta_0}$,
we have
\begin{align}%
\label{2_en3}
	&\frac{d}{d t} \left[ (t_0 + t)^{\frac{N-\alpha}{2-\alpha} - \delta_0}
	 			\int_{\mathbb{R}^N} e^{2\psi} \left( uu_t + \frac{a(x)}{2}u^2 \right) \,dx \right] \\
\notag
	&\quad
		- \left( \frac{N-\alpha}{2-\alpha} - \delta_0 \right)
			(t_0 + t)^{\frac{N-\alpha}{2-\alpha} -1 - \delta_0}
			\int_{\mathbb{R}^N} e^{2\psi} \left( uu_t + \frac{a(x)}{2}u^2 \right) \,dx\\
\notag
	&\quad
		+ (t_0 + t)^{\frac{N-\alpha}{2-\alpha} - \delta_0}
			\int_{\mathbb{R}^N} e^{2\psi}
						\left( \delta_5 |\nabla u|^2 + \frac{\delta}{8}(-\psi_t) a(x)u^2 \right) \,dx\\
\notag
	&\quad
		+ \left( \frac{N-\alpha}{2(2-\alpha)} - \delta_2 \right)
			(t_0 + t)^{\frac{N-\alpha}{2-\alpha} -1 - \delta_0}
			\int_{\mathbb{R}^N} e^{2\psi} a(x) u^2 \,dx\\
\notag
	&\quad
		- (t_0 + t)^{\frac{N-\alpha}{2-\alpha} - \delta_0}
			\int_{\mathbb{R}^N} e^{2\psi}\left( 1+ \frac{8(-\psi_t)}{\delta a(x)} \right) u_t^2 \,dx\\
\notag
	&\le (t_0 + t)^{\frac{N-\alpha}{2-\alpha} - \delta_0}
		\int_{\mathbb{R}^N} e^{2\psi} |u|^p u\,dx.
\end{align}%
Since
$\delta = \frac{2-\alpha}{2(N-\alpha)}\delta_0$
and
$\delta_2 = \frac{N-\alpha}{2(2-\alpha)} \delta$,
we see that
$4 \delta_2 = \delta_0$ holds and hence,
we compute
\begin{align*}%
	&- \left( \frac{N-\alpha}{2-\alpha} - \delta_0 \right)
			(t_0 + t)^{\frac{N-\alpha}{2-\alpha} -1 - \delta_0}
			\int_{\mathbb{R}^N} e^{2\psi} \frac{a(x)}{2}u^2 \,dx \\
	&\quad +  \left( \frac{N-\alpha}{2(2-\alpha)} - \delta_2 \right)
			(t_0 + t)^{\frac{N-\alpha}{2-\alpha} -1 - \delta_0}
			\int_{\mathbb{R}^N} e^{2\psi} a(x) u^2 \,dx\\
	&= \delta_2 (t_0 + t)^{\frac{N-\alpha}{2-\alpha} -1 - \delta_0}
			\int_{\mathbb{R}^N} e^{2\psi} a(x) u^2 \,dx.
\end{align*}%
Moreover, the Schawarz inequality implies
\begin{align*}%
	&\left( \frac{N-\alpha}{2-\alpha} - \delta_0 \right)
			(t_0 + t)^{\frac{N-\alpha}{2-\alpha} -1 - \delta_0}
			\int_{\mathbb{R}^N} e^{2\psi} uu_t \,dx \\
	&\le \delta_6 (t_0 + t)^{\frac{N-\alpha}{2-\alpha} -1 - \delta_0}
			\int_{\mathbb{R}^N} e^{2\psi} a(x)u^2 \,dx
		+ C(\delta_6) (t_0 + t)^{\frac{N-\alpha}{2-\alpha} -1 - \delta_0}
			\int_{\mathbb{R}^N} e^{2\psi} \frac{1}{a(x)} u_t^2 \,dx\\
	&\le \delta_6 (t_0 + t)^{\frac{N-\alpha}{2-\alpha} -1 - \delta_0}
			\int_{\mathbb{R}^N} e^{2\psi} a(x) u^2 \,dx
		+ \frac{C(\delta_6)}{t_0} (t_0 + t)^{\frac{N-\alpha}{2-\alpha} - \delta_0}
			\int_{\mathbb{R}^N} e^{2\psi} \frac{1}{a(x)} u_t^2 \,dx
\end{align*}%
where
$\delta_6 = \frac{\delta_2}{2}$
and
$C(\delta_6) = \frac{1}{4\delta_6}(\frac{N-\alpha}{2-\alpha} - \delta_0)^2$.
Applying these estimates to \eqref{2_en3}, we deduce
\begin{align*}%
	&\frac{d}{d t} \left[ (t_0 + t)^{\frac{N-\alpha}{2-\alpha} - \delta_0}
	 			\int_{\mathbb{R}^N} e^{2\psi} \left( uu_t + \frac{a(x)}{2}u^2 \right) \,dx \right] \\
	&\quad
		+ (t_0 + t)^{\frac{N-\alpha}{2-\alpha} - \delta_0}
			\int_{\mathbb{R}^N} e^{2\psi}
						\left( \delta_5 |\nabla u|^2 + \frac{\delta}{8}(-\psi_t) a(x)u^2 \right) \,dx\\
\notag
	&\quad
		+ \delta_6 (t_0 + t)^{\frac{N-\alpha}{2-\alpha} -1 - \delta_0}
			\int_{\mathbb{R}^N} e^{2\psi} a(x) u^2 \,dx\\
\notag
	&\quad
		- (t_0 + t)^{\frac{N-\alpha}{2-\alpha} - \delta_0}
			\int_{\mathbb{R}^N} e^{2\psi}
					\left( 1+ \frac{C(\delta_6)}{t_0 a(x)}+ \frac{8(-\psi_t)}{\delta a(x)} \right) u_t^2 \,dx\\
\notag
	&\le (t_0 + t)^{\frac{N-\alpha}{2-\alpha} - \delta_0}
		\int_{\mathbb{R}^N} e^{2\psi} |u|^p u\,dx.
\end{align*}%
Integrating it over $[0,t]$, we conclude
\begin{align}%
\label{2_en4}
	&(t_0 + t)^{\frac{N-\alpha}{2-\alpha} - \delta_0}
	 			\int_{\mathbb{R}^N} e^{2\psi} \left( uu_t + \frac{a(x)}{2}u^2 \right) \,dx \\
\notag
	&\quad + \int_0^t (t_0+\tau)^{\frac{N-\alpha}{2-\alpha} - \delta_0}
					\int_{\mathbb{R}^N} e^{2\psi}
						\left( \delta_5 |\nabla u|^2 + \frac{\delta}{8}(-\psi_t) a(x)u^2 \right) \,dx d\tau\\
\notag
	&\quad + \delta_6 \int_0^t (t_0 + \tau )^{\frac{N-\alpha}{2-\alpha} -1 - \delta_0}
			\int_{\mathbb{R}^N} e^{2\psi} a(x) u^2 \,dx d\tau\\
\notag
	&\quad - \int_0^t (t_0 + \tau )^{\frac{N-\alpha}{2-\alpha} - \delta_0}
			\int_{\mathbb{R}^N} e^{2\psi}
				\left( 1+ \frac{C(\delta_6)}{t_0 a(x)}+ \frac{8(-\psi_t)}{\delta a(x)} \right) u_t^2
			\,dx d\tau\\
\notag
	&\le C(t_0) \int_{\mathbb{R}^N} e^{2\psi(0,x)} (u_0u_1 + \frac{a(x)}{2} u_0^2 )\,dx 
		+ \int_0^t (t_0 + \tau )^{\frac{N-\alpha}{2-\alpha} - \delta_0}
		\int_{\mathbb{R}^N} e^{2\psi} |u|^p u\,dx d\tau.
\end{align}%

On the other hand, integrating \eqref{2_en1} over $\mathbb{R}^N$, we have
\begin{align*}%
	&\frac{d}{d t} \int_{\mathbb{R}^N} \frac{e^{2\psi}}{2}( u_t^2 + |\nabla u|^2) \,dx \\
	&\quad + \int_{\mathbb{R}^N}
			e^{2\psi} \left\{  \left( \frac{a(x)}{4} - \psi_t \right)u_t^2 + \frac{-\psi_t}{5} |\nabla u|^2 \right\}\,dx \\
	&\le  \frac{d}{d t} \int_{\mathbb{R}^N} e^{2\psi}F(u) \, dx
		+ 2 \int_{\mathbb{R}^N} e^{2\psi} (-\psi_t) F(u) \,dx.
\end{align*}%
We multiply it by
$(t_0 + t)^{\frac{N-\alpha}{2-\alpha} + 1 - \delta_0}$
to obtain
\begin{align*}%
	&\frac{d}{d t} \left[ (t_0 + t)^{\frac{N-\alpha}{2-\alpha} + 1 - \delta_0}
		\int_{\mathbb{R}^N} \frac{e^{2\psi}}{2}( u_t^2 + |\nabla u|^2) \,dx \right] \\
	&\quad - \left\{ \frac{N-\alpha}{2-\alpha} + 1 -\delta_0 \right\}
		(t_0 + t)^{\frac{N-\alpha}{2-\alpha} - \delta_0}
		\int_{\mathbb{R}^N} \frac{e^{2\psi}}{2}( u_t^2 + |\nabla u|^2) \,dx \\
	&\quad + (t_0 + t)^{\frac{N-\alpha}{2-\alpha} + 1 - \delta_0} \int_{\mathbb{R}^N}
			e^{2\psi} \left\{  \left( \frac{a(x)}{4} - \psi_t \right)u_t^2 + \frac{-\psi_t}{5} |\nabla u|^2 \right\}\,dx \\
	&\le  \frac{d}{d t} \left[ (t_0 + t)^{\frac{N-\alpha}{2-\alpha} + 1 - \delta_0}
						\int_{\mathbb{R}^N} e^{2\psi}F(u) \, dx \right]\\
	&\quad - \left\{ \frac{N-\alpha}{2-\alpha} + 1 -\delta_0 \right\}
			(t_0 + t)^{\frac{N-\alpha}{2-\alpha} - \delta_0}
						\int_{\mathbb{R}^N} e^{2\psi}F(u) \, dx \\
	&\quad + 2 (t_0 + t)^{\frac{N-\alpha}{2-\alpha} + 1 - \delta_0} \int_{\mathbb{R}^N} e^{2\psi} (-\psi_t) F(u) \,dx.
\end{align*}%
Noting that
\begin{align*}%
	\left\{ \frac{N-\alpha}{2-\alpha} + 1 -\delta_0 \right\}
	(t_0 + t)^{\frac{N-\alpha}{2-\alpha} - \delta_0}
	\le (t_0 + t)^{\frac{N-\alpha}{2-\alpha} + 1 - \delta_0} \frac{a(x)}{8}
\end{align*}%
holds if we choose
$t_0\ge 1$ so that
$t_0 \ge \frac{8}{a_0}(\frac{N-\alpha}{2-\alpha} + 1 -\delta_0)$,
we calculate
\begin{align*}%
	&\frac{d}{d t} \left[ (t_0 + t)^{\frac{N-\alpha}{2-\alpha} + 1 - \delta_0}
		\int_{\mathbb{R}^N} \frac{e^{2\psi}}{2}( u_t^2 + |\nabla u|^2) \,dx \right] \\
	&\quad + (t_0 + t)^{\frac{N-\alpha}{2-\alpha} + 1 - \delta_0} \int_{\mathbb{R}^N}
			e^{2\psi} \left\{  \left( \frac{a(x)}{8} - \psi_t \right)u_t^2 + \frac{-\psi_t}{5} |\nabla u|^2 \right\}\,dx \\
	&\quad - \left\{ \frac{N-\alpha}{2-\alpha} + 1 -\delta_0 \right\}
		(t_0 + t)^{\frac{N-\alpha}{2-\alpha} - \delta_0}
		\int_{\mathbb{R}^N} \frac{e^{2\psi}}{2}|\nabla u|^2 \,dx \\
	&\le  \frac{d}{d t} \left[ (t_0 + t)^{\frac{N-\alpha}{2-\alpha} + 1 - \delta_0}
						\int_{\mathbb{R}^N} e^{2\psi}F(u) \, dx \right]\\
	&\quad - \left\{ \frac{N-\alpha}{2-\alpha} + 1 -\delta_0 \right\}
			(t_0 + t)^{\frac{N-\alpha}{2-\alpha} - \delta_0}
						\int_{\mathbb{R}^N} e^{2\psi}F(u) \, dx \\
	&\quad + 2 (t_0 + t)^{\frac{N-\alpha}{2-\alpha} + 1 - \delta_0} \int_{\mathbb{R}^N} e^{2\psi} (-\psi_t) F(u) \,dx.
\end{align*}%
Integrating it over $[0,t]$, we conclude
\begin{align}%
\label{2_en5}
	&(t_0 + t)^{\frac{N-\alpha}{2-\alpha} + 1 - \delta_0}
		\int_{\mathbb{R}^N} \frac{e^{2\psi}}{2}( u_t^2 + |\nabla u|^2) \,dx \\
\notag
	&\quad + \int_0^t (t_0 + \tau)^{\frac{N-\alpha}{2-\alpha} + 1 - \delta_0} \int_{\mathbb{R}^N}
			e^{2\psi} \left\{  \left( \frac{a(x)}{8} - \psi_t \right)u_t^2 + \frac{-\psi_t}{5} |\nabla u|^2 \right\}\,dx d\tau \\
\notag
	&\quad - \left\{ \frac{N-\alpha}{2-\alpha} + 1 -\delta_0 \right\}
		\int_0^t (t_0 + \tau)^{\frac{N-\alpha}{2-\alpha} - \delta_0}
		\int_{\mathbb{R}^N} \frac{e^{2\psi}}{2}|\nabla u|^2 \,dx d\tau \\
\notag
	&\le C(t_0) \int_{\mathbb{R}^N} e^{2\psi(0,x)} (u_1^2 + |\nabla u_0|^2 + |u_0|^{p+1} )\,dx\\
\notag
	&\quad + (t_0 + t)^{\frac{N-\alpha}{2-\alpha} + 1 - \delta_0}
						\int_{\mathbb{R}^N} e^{2\psi}F(u) \, dx \\
\notag
	&\quad - \left\{ \frac{N-\alpha}{2-\alpha} + 1 -\delta_0 \right\}
			\int_0^t (t_0 + \tau)^{\frac{N-\alpha}{2-\alpha} - \delta_0}
						\int_{\mathbb{R}^N} e^{2\psi}F(u) \, dx d\tau\\
\notag
	&\quad + 2 \int_0^t (t_0 + \tau )^{\frac{N-\alpha}{2-\alpha} + 1 - \delta_0}
					\int_{\mathbb{R}^N} e^{2\psi} (-\psi_t) F(u) \,dx d\tau.
\end{align}%
Finally, we combine \eqref{2_en4} and \eqref{2_en5}.
We take $\nu > 0$ so that
\begin{align*}%
	\nu \left\{ \frac{N-\alpha}{2-\alpha} + 1 -\delta_0 \right\} \le \frac{\delta_5}{2}
\end{align*}%
holds and, we choose $t_0$ sufficiently large so that
\begin{align*}%
	\frac{1}{t_0} \left( 1+\frac{C(\delta_6)}{t_0} \right) &\le \frac{\nu}{16}a(x),\\
	\frac{8}{\delta t_0 a(x)} \le \frac{\nu}{2},
\end{align*}%
Computing \eqref{2_en4} $+\nu\cdot$\eqref{2_en5}, we conclude
\begin{align*}%
	&\nu (t_0 + t)^{\frac{N-\alpha}{2-\alpha} + 1 - \delta_0}
		\int_{\mathbb{R}^N} \frac{e^{2\psi}}{2}( u_t^2 + |\nabla u|^2) \,dx
	+(t_0 + t)^{\frac{N-\alpha}{2-\alpha} - \delta_0}
	 			\int_{\mathbb{R}^N} e^{2\psi} \left( uu_t + \frac{a(x)}{2}u^2 \right) \,dx \\
	&\quad
		+ \nu \int_0^t (t_0 + \tau)^{\frac{N-\alpha}{2-\alpha} + 1 - \delta_0} \int_{\mathbb{R}^N}
			e^{2\psi} \left\{  \left( \frac{a(x)}{16} + \frac{-\psi_t}{2} \right)u_t^2
						+ \frac{-\psi_t}{5} |\nabla u|^2 \right\}\,dx d\tau\\
	&\quad + \int_0^t (t_0+\tau)^{\frac{N-\alpha}{2-\alpha} - \delta_0}
					\int_{\mathbb{R}^N} e^{2\psi}
						\left( \frac{\delta_5}{2} |\nabla u|^2 + \frac{\delta}{8}(-\psi_t) a(x)u^2 \right) \,dx d\tau\\
	&\quad + \delta_6 \int_0^t (t_0 + \tau )^{\frac{N-\alpha}{2-\alpha} -1 - \delta_0}
			\int_{\mathbb{R}^N} e^{2\psi} a(x) u^2 \,dx d\tau\\
	&\le C(t_0) \int_{\mathbb{R}^N} e^{2\psi(0,x)} (u_1^2 +|\nabla u_0|^2 +|u_0|^{p+1}+ a(x)u_0^2 )\,dx \\
	&\quad + \nu (t_0 + t)^{\frac{N-\alpha}{2-\alpha} + 1 - \delta_0}
						\int_{\mathbb{R}^N} e^{2\psi} |F(u)| \, dx\\
	&\quad + C(N, \alpha, \delta_0, \nu) \int_0^t (t_0 + \tau )^{\frac{N-\alpha}{2-\alpha} - \delta_0}
				\int_{\mathbb{R}^N} e^{2\psi} |F(u)| \,dx d\tau \\
	&\quad + C(\nu) \int_0^t (t_0 + \tau )^{\frac{N-\alpha}{2-\alpha} + 1 - \delta_0}
					\int_{\mathbb{R}^N} e^{2\psi} (-\psi_t) |F(u)| \,dx d\tau.
\end{align*}%
Finally, noting that
\begin{align*}%
	\nu (t_0 + t)^{\frac{N-\alpha}{2-\alpha} + 1 - \delta_0}
		\int_{\mathbb{R}^N} \frac{e^{2\psi}}{2}( u_t^2 + |\nabla u|^2) \,dx
	+(t_0 + t)^{\frac{N-\alpha}{2-\alpha} - \delta_0}
	 			\int_{\mathbb{R}^N} e^{2\psi} \left( uu_t + \frac{a(x)}{2}u^2 \right) \,dx \\
	\ge
	\frac{\nu}{2} (t_0 + t)^{\frac{N-\alpha}{2-\alpha} + 1 - \delta_0}
		\int_{\mathbb{R}^N} \frac{e^{2\psi}}{2}( u_t^2 + |\nabla u|^2) \,dx
	+ \frac14 (t_0 + t)^{\frac{N-\alpha}{2-\alpha} - \delta_0}
	 			\int_{\mathbb{R}^N} e^{2\psi} a(x) u^2 \,dx
\end{align*}%
if $t_0 \ge \frac{2}{\nu a_0}$,
we have the assertion of Lemma \ref{2_lem_en}.
\end{proof}

\begin{remark}\label{rem_parameters}
In the above proof, we have determined
the positive constants
$\delta, \delta_j \ (j = 1,\ldots,6), \nu, t_0$
in the following way.
First, for given
$\delta_0 \in (0,\frac{N-\alpha}{2-\alpha})$,
we define
$\delta := \frac{2-\alpha}{2(N-\alpha)}\delta_0$.
Then, we choose $\delta_j \ (j =1,\ldots,6)$ as
$\delta_1 := \frac23 \delta$,
$\delta_2 := \frac{N-\alpha}{2(2-\alpha)}\delta$,
$\delta_3$ such that $1-\delta_3 \ge \frac{2+\delta_1}{2+\delta}$,
$\delta_4:= \delta_1 - \frac{\delta}{2} - \frac{\delta \delta_1}{4}$,
$\delta_5 := \frac{\delta_4}{4+\delta_4}$,
$\delta_6 := \frac{\delta_2}{2}$.
Note that
$\delta = O(\delta_0)$
amd
$\delta_j = O(\delta_0) \ (j=1,\ldots,6)$
as $\delta_0 \to 0+$.
After that, we take $\nu$ as
$\nu (\frac{N-\alpha}{2-\alpha} + 1 - \delta_0) \le \frac{\delta_5}{2}$.
Then, we choose $t_0 \ge 1$ so that
\begin{align*}%
	&t_0 \ge \frac{8}{a_0}(\frac{N-\alpha}{2-\alpha} + 1 -\delta_0),
	\quad
	\frac{1}{t_0} \left( 1+\frac{C(\delta_6)}{t_0} \right) \le \frac{\nu}{16}a(x),\\
	&\frac{8}{\delta t_0 a(x)} \le \frac{\nu}{2},
	\quad
	t_0 \ge \frac{2}{\nu a_0}
\end{align*}%
hold for any $x \in \mathbb{R}^N$.
These observations will be useful when we discuss the case
$\alpha < 0, \beta = 1$
(see Appendix B).

Finally, we also remark that
$\delta_0$ will be determined depending on $p$ in the nonlinear estimates discussed in the next subsection.
\end{remark}

\subsection{Nonlinear estimates and proof of Proposition \ref{2_prop_ap}}
In this subsection,
we give the nonlinear estimates for the right-hand side of Lemma \ref{2_lem_en}
and complete the proof of Proposition \ref{2_prop_ap}.
We first recall the following special case of Gagliardo--Nirenberg inequality.
\begin{lemma}\label{GN_ineq}
{\rm (Gagliardo--Nirenberg inequality, see \cite[Section 6.1.1]{GiGiSa})}
For $1\le p < \infty \ (N=1,2)$, $1 \le p \le \frac{N+2}{N-2} \ (N\ge 3)$, there exists a constant $C>0$ such that
for any $u \in H^1(\mathbb{R}^N)$, we have
\begin{align*}%
	\| u \|_{L^{p+1}} \le C \| \nabla u \|_{L^2}^{\theta} \| u \|_{L^2}^{1-\theta},
\end{align*}%
where $\theta = \frac{N(p-1)}{2(p+1)} \in [0,1]$.
\end{lemma}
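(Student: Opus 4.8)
The first thing I would record is that the exponent $\theta=\tfrac{N(p-1)}{2(p+1)}$ is not a free parameter but is forced by scaling. Writing $q:=p+1$ and applying the candidate inequality to the dilates $u_\lambda(x):=u(\lambda x)$, the three quantities $\|u_\lambda\|_{L^q}$, $\|\nabla u_\lambda\|_{L^2}$, $\|u_\lambda\|_{L^2}$ scale like $\lambda^{-N/q}$, $\lambda^{1-N/2}$, $\lambda^{-N/2}$ respectively, and demanding that the estimate be scale invariant gives $\theta=N\bigl(\tfrac12-\tfrac1q\bigr)=\tfrac{N(p-1)}{2(p+1)}$. So the plan is to prove the homogeneous estimate with exactly this $\theta$; by density it suffices to argue for $u\in C_c^\infty(\mathbb{R}^N)$ and then extend to $H^1$ by approximation.

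The engine I would use is the endpoint $L^1$-based Sobolev inequality
\begin{align*}
	\|w\|_{L^{N/(N-1)}}\le C\|\nabla w\|_{L^1},
\end{align*}
valid for all $N\ge1$ (the left-hand norm being read as $\|w\|_{L^\infty}$ when $N=1$), which follows from the fundamental theorem of calculus applied along each coordinate axis together with the generalized H\"older / arithmetic--geometric mean inequality. Applying it to $w=|u|^{\gamma}$ with a power $\gamma>1$ to be chosen, and using $\nabla w=\gamma|u|^{\gamma-1}\operatorname{sgn}(u)\nabla u$ followed by the Cauchy--Schwarz inequality, yields
\begin{align*}
	\|u\|_{L^{\gamma N/(N-1)}}^{\gamma}\le C\gamma\int_{\mathbb{R}^N}|u|^{\gamma-1}|\nabla u|\,dx
		\le C\gamma\,\|u\|_{L^{2(\gamma-1)}}^{\gamma-1}\,\|\nabla u\|_{L^2}.
\end{align*}

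To conclude I would choose $\gamma$ so that $\gamma N/(N-1)=q$, interpolate the intermediate factor $\|u\|_{L^{2(\gamma-1)}}$ between $L^2$ and $L^q$ by H\"older, and then solve the resulting algebraic inequality for $\|u\|_{L^q}$; the bookkeeping of exponents should collapse exactly to the scaling-determined value of $\theta$. When $N\ge3$ there is a shorter route that I would prefer to present: the critical Sobolev embedding $\|u\|_{L^{2^\ast}}\le C\|\nabla u\|_{L^2}$ with $2^\ast=\tfrac{2N}{N-2}$ holds, the hypothesis $1\le p\le\tfrac{N+2}{N-2}$ is precisely $2\le q\le 2^\ast$, and H\"older interpolation $\|u\|_{L^q}\le\|u\|_{L^2}^{1-a}\|u\|_{L^{2^\ast}}^{a}$ with $\tfrac1q=\tfrac{1-a}{2}+\tfrac{a}{2^\ast}$ gives the claim after checking that $a=N(\tfrac12-\tfrac1q)=\theta$. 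The dimensions $N=1,2$, where no finite critical exponent exists and $q$ can be arbitrarily large, must still be handled through the $|u|^{\gamma}$ argument above.

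The main obstacle is purely the algebra of closing this last step in low dimensions: one must verify that the chosen $\gamma$ lies in the admissible range, that the self-improving inequality can genuinely be solved for $\|u\|_{L^q}$ (i.e.\ the exponents do not conspire to make the powers of $\|u\|_{L^q}$ on the two sides coincide), and that the outcome is exactly $\theta$ and $1-\theta$ rather than merely a consistent pair. Establishing the endpoint Sobolev inequality itself, if one does not take it as known, is the only genuinely analytic input. Since the statement is classical and here quoted from \cite[Section 6.1.1]{GiGiSa}, in the body of the paper it is used only as a black box.
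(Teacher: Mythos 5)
The paper offers no proof of this lemma at all: it is quoted as a black box from \cite[Section 6.1.1]{GiGiSa}, exactly as you observe in your last sentence, so there is no in-paper argument to compare against; your proposal supplies the standard self-contained proof, and it is essentially correct. The scaling computation forcing $\theta=N(\tfrac12-\tfrac{1}{p+1})$ is right, and for $N\ge3$ your short route is complete: the hypothesis $1\le p\le\tfrac{N+2}{N-2}$ is precisely $2\le q\le 2^{\ast}$, and solving $\tfrac1q=\tfrac{1-a}{2}+\tfrac{a}{2^{\ast}}$ does give $a=N(\tfrac12-\tfrac1q)=\theta$. One concrete caveat in the low-dimensional bookkeeping you defer: with your choice $\gamma N/(N-1)=q$ in $N=2$ one gets $2(\gamma-1)=q-2$, which lies in the interpolation range $[2,q]$ only when $q\ge4$; for $2<q<4$ the intermediate norm $\|u\|_{L^{q-2}}$ is \emph{not} controlled by $\|u\|_{L^2}$ and $\|u\|_{L^q}$ (indeed $L^{q-2}$ need not be finite for $u\in H^1(\mathbb{R}^2)$), so the literal recipe fails there. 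The standard repair is to first take $\gamma=2$, obtaining Ladyzhenskaya's inequality $\|u\|_{L^4}^2\le C\|u\|_{L^2}\|\nabla u\|_{L^2}$, interpolate $L^q$ between $L^2$ and $L^4$ for $2\le q\le4$, and bootstrap upward in $\gamma$ for larger $q$; in $N=1$ one can argue directly from $\|u\|_{L^\infty}^2\le2\|u\|_{L^2}\|u'\|_{L^2}$ and $\|u\|_{L^q}^q\le\|u\|_{L^\infty}^{q-2}\|u\|_{L^2}^2$, which reproduces $\theta=\tfrac{q-2}{2q}$. It is worth noting that the authors do prove the companion Lemma \ref{CKN_ineq} in Appendix A by an elementary integration-by-parts bootstrap quite close in spirit to your $|u|^{\gamma}$ device; your argument would make Lemma \ref{GN_ineq} equally self-contained, at the cost of roughly a page, whereas the citation keeps the paper lean since only the stated inequality, never its proof, is used in the nonlinear estimates of Section 2.
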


Besides the above lemma, we also use the following
special case of Caffarelli--Kohn--Nirenberg inequality.
\begin{lemma}\label{CKN_ineq}
{\rm (Caffarelli--Kohn--Nirenberg inequality, see \cite{CaKoNi84})}
For $k= 0,1,2,\ldots$, there exist $C(k,N) > 0$ such that for $u \in H^1(\mathbb{R}^N)$
with compact support, we have
\begin{align*}%
	\| u \|_{L^2} \le C(k,N) \| \nabla u \|_{L^2}^{1-1/2^k} \| |x|^{2^k-1} u \|_{L^2}^{1/2^k}.
\end{align*}%
\end{lemma}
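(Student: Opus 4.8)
The plan is to prove the inequality by induction on $k$, with the doubling of the weight exponent $2^k-1$ built directly into the induction step. The base case $k=0$ is trivial: then $|x|^{2^0-1}=1$ and the exponents are $1-1/2^0=0$ and $1/2^0=1$, so the asserted inequality reduces to $\|u\|_{L^2}\le C(0,N)\|u\|_{L^2}$, which holds with $C(0,N)=1$.

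The engine of the induction is a single weighted integration-by-parts estimate. For a parameter $m\ge 0$ and $u\in H^1(\mathbb{R}^N)$ with compact support, I would exploit the identity $\mathrm{div}(|x|^{2m}x)=(2m+N)|x|^{2m}$ together with the divergence theorem (no boundary terms, thanks to the compact support) to write
\begin{align*}
	\int_{\mathbb{R}^N}|x|^{2m}u^2\,dx
	&= \frac{1}{2m+N}\int_{\mathbb{R}^N}\mathrm{div}(|x|^{2m}x)\,u^2\,dx\\
	&= -\frac{2}{2m+N}\int_{\mathbb{R}^N}|x|^{2m}u\,(x\cdot\nabla u)\,dx.
\end{align*}
Bounding $|x\cdot\nabla u|\le |x|\,|\nabla u|$ and applying the Cauchy--Schwarz inequality then yields
\[
	\big\||x|^{m}u\big\|_{L^2}^2
	\le \frac{2}{2m+N}\big\||x|^{2m+1}u\big\|_{L^2}\,\|\nabla u\|_{L^2},
\]
that is, $\||x|^{m}u\|_{L^2}\le C_m\,\||x|^{2m+1}u\|_{L^2}^{1/2}\|\nabla u\|_{L^2}^{1/2}$ with $C_m=(2/(2m+N))^{1/2}$.

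For the induction step I would apply this estimate with $m=2^k-1$, which doubles the weight exactly as required since $2m+1=2^{k+1}-1$, and substitute the result into the induction hypothesis at level $k$. The exponent bookkeeping then closes automatically: the power of $\|\nabla u\|_{L^2}$ becomes $(1-2^{-k})+\tfrac12\cdot 2^{-k}=1-2^{-(k+1)}$, the power of the weighted factor becomes $\tfrac12\cdot 2^{-k}=2^{-(k+1)}$, and the new constant is $C(k+1,N)=C(k,N)\,C_{2^k-1}^{1/2^k}$. This reproduces the claim at level $k+1$, completing the induction.

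The only genuinely delicate point is the rigorous justification of the integration by parts and the finiteness of every integral involved. Since $u$ is $H^1$ with compact support, all the weighted norms $\||x|^{j}u\|_{L^2}$ are finite (the weights are bounded on $\mathrm{supp}\,u$), and $u^2\in W^{1,1}$ with $\nabla(u^2)=2u\nabla u$; approximating $u$ by smooth compactly supported functions legitimizes the divergence theorem and lets one pass to the limit. One also notes $2m+N>0$ throughout (automatic since $m=2^k-1\ge 0$), so the weight $|x|^{2m}$ has no singularity at the origin. Everything else is routine.
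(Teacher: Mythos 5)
Your proof is correct and follows essentially the same route as the paper's: the weighted integration-by-parts identity you use (with $\gamma = 2m$, i.e.\ $\operatorname{div}(|x|^{\gamma}x) = (N+\gamma)|x|^{\gamma}$ plus Cauchy--Schwarz) is exactly the paper's inequality \eqref{app_1}, and the weight-doubling choice $m = 2^k-1$, $2m+1 = 2^{k+1}-1$ is precisely how the paper iterates through $k=1,2,3$ before saying ``repeating this argument.'' Your only addition is to package that repetition as a formal induction with explicit constants and a careful justification of the integration by parts, which tightens but does not change the argument.
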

We give a short proof of this lemma in Appendix.

Based on the above lemma, we first prepare the following.
\begin{lemma}\label{2_lem_int}
Under the assumptions of Proposition \ref{2_prop_ap},
for any integer $k$ satisfying $2^k-1 \ge - \frac{\alpha}{2}$,
there exists a constant
$C(\alpha,p,k,N) > 0$ such that
for any $u \in H^1(\mathbb{R}^N)$ with compact support, we have
\begin{align*}%
	\| e^{\frac{2\psi}{p+1}} u \|_{L^2}
	&\le C(\alpha, p, k, N) 
		\left( (1+t)^{-\frac12} \| e^{\psi} \sqrt{a} u \|_{L^2} + \| e^{\psi} \nabla u\|_{L^2} \right)^{1-1/2^k} \\
	&\quad \times
		\left( (1+t)^{(2^k-1+\frac{\alpha}{2})/(2-\alpha)} \| e^{\psi} \sqrt{a} u \|_{L^2} \right)^{1/2^k}.
\end{align*}%
\end{lemma}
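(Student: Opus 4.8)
The plan is to apply the special case of the Caffarelli--Kohn--Nirenberg inequality (Lemma \ref{CKN_ineq}) to the weighted function $v := e^{\frac{2\psi}{p+1}} u$, which inherits the compact support of $u$ and lies in $H^1(\mathbb{R}^N)$. Since $p>1$ and, by taking $A_0$ large in Definition \ref{2_def_psi}, $\psi \ge 0$, one has the elementary pointwise bound $e^{\frac{2\psi}{p+1}} \le e^{\psi}$; the same choice of $A_0$ gives the lower bound $\psi \ge \frac{\mu}{2(1+t)}\langle x\rangle^{2-\alpha}$, because the Newton-potential correction appearing in $\psi$ grows strictly slower than $\langle x\rangle^{2-\alpha}$ and is absorbed. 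Applying Lemma \ref{CKN_ineq} to $v$ yields $\|v\|_{L^2} \le C(k,N)\|\nabla v\|_{L^2}^{1-1/2^k}\||x|^{2^k-1}v\|_{L^2}^{1/2^k}$, so it remains to bound the two factors on the right. The whole argument rests on the elementary inequality $\sup_{y\ge 0} y^m e^{-cy} < \infty$, valid for every $m \ge 0$ and $c>0$.

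For the gradient factor I would use $\nabla v = e^{\frac{2\psi}{p+1}}\bigl(\nabla u + \tfrac{2}{p+1} u \nabla\psi\bigr)$ and the triangle inequality. The term containing $\nabla u$ is controlled by $\|e^{\psi}\nabla u\|_{L^2}$ thanks to $e^{\frac{2\psi}{p+1}} \le e^{\psi}$. For the term containing $\nabla\psi$ it suffices to establish the pointwise estimate $|\nabla\psi|\, e^{\frac{2\psi}{p+1}} \le C (1+t)^{-1/2}\sqrt{a(x)}\,e^{\psi}$, equivalently $\frac{|\nabla\psi|}{\sqrt{a}}\,e^{-\frac{p-1}{p+1}\psi}\le C(1+t)^{-1/2}$. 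Using $|\nabla\psi|\le C(1+t)^{-1}\langle x\rangle^{1-\alpha}$ (the leading term of $\nabla\psi$ having magnitude $\sim \mu(2-\alpha)(1+t)^{-1}\langle x\rangle^{-\alpha}|x|$, the correction being lower order by the bounds in the proof of Lemma \ref{2_lem_psi}) together with $\sqrt{a}= \sqrt{a_0}\langle x\rangle^{-\alpha/2}$, this reduces, after substituting $y := \langle x\rangle^{2-\alpha}/(1+t)$ and setting $s' := 1-\tfrac{\alpha}{2}$, to $y^{s'/(2-\alpha)} e^{-cy}\le C$ with a constant $c>0$ depending on $\mu$ and $p$. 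Since $2-\alpha = 2s'$ the exponent $s'/(2-\alpha)$ equals $\tfrac12$ and the time power $(1+t)^{-1/2}$ emerges exactly; this factor is thus bounded by $C(1+t)^{-1/2}\|e^{\psi}\sqrt{a}\,u\|_{L^2}$.

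For the weight factor I would prove the analogous pointwise estimate $|x|^{2^k-1} e^{\frac{2\psi}{p+1}}\le C(1+t)^{(2^k-1+\alpha/2)/(2-\alpha)}\sqrt{a(x)}\,e^{\psi}$. Writing $|x|^{2^k-1}\langle x\rangle^{\alpha/2}\le \langle x\rangle^{2^k-1+\alpha/2}$, which is legitimate since the hypothesis $2^k-1\ge-\tfrac{\alpha}{2}$ forces both $2^k-1\ge 0$ and $s := 2^k-1+\tfrac{\alpha}{2}\ge 0$, and using the same substitution $y = \langle x\rangle^{2-\alpha}/(1+t)$, the claim becomes $y^{s/(2-\alpha)} e^{-cy}\le C$, which holds by the elementary inequality; the resulting time power is precisely $s/(2-\alpha) = (2^k-1+\alpha/2)/(2-\alpha)$. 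Integrating the two pointwise bounds gives $\|\nabla v\|_{L^2}\le C\bigl((1+t)^{-1/2}\|e^{\psi}\sqrt{a}\,u\|_{L^2}+\|e^{\psi}\nabla u\|_{L^2}\bigr)$ and $\||x|^{2^k-1}v\|_{L^2}\le C(1+t)^{(2^k-1+\alpha/2)/(2-\alpha)}\|e^{\psi}\sqrt{a}\,u\|_{L^2}$, and substituting these into the Caffarelli--Kohn--Nirenberg inequality yields the assertion, since $\|v\|_{L^2}=\|e^{\frac{2\psi}{p+1}}u\|_{L^2}$.

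The main obstacle is bookkeeping rather than conceptual: one must check that the polynomial-versus-exponential trade-off encoded in $\sup_{y\ge 0} y^m e^{-cy}<\infty$ returns \emph{exactly} the claimed time exponents, which succeeds only because of the homogeneity $2-\alpha = 2\bigl(1-\tfrac{\alpha}{2}\bigr)$ linking the spatial weight $\langle x\rangle^{2-\alpha}$ in $\psi$ to the powers $\langle x\rangle^{1-\alpha/2}$ and $\langle x\rangle^{2^k-1+\alpha/2}$ that appear above. Two secondary points require care: the positivity and the lower bound on $\psi$, hence the admissibility of the choice of $A_0$ in Definition \ref{2_def_psi}; and the verification that the Newton-potential correction and its gradient, bounded in the proof of Lemma \ref{2_lem_psi}, are genuinely of lower order so that the estimates $|\nabla\psi|\le C(1+t)^{-1}\langle x\rangle^{1-\alpha}$ and $\psi\ge \frac{\mu}{2(1+t)}\langle x\rangle^{2-\alpha}$ remain valid. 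The dependence of the final constant on $\alpha,p,k,N$ enters only through $\mu$ and the exponent $c$ in the elementary inequality.
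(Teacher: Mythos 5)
Your proof is correct and takes essentially the same route as the paper: apply the Caffarelli--Kohn--Nirenberg inequality to $v=e^{\frac{2\psi}{p+1}}u$, then control the two factors by the pointwise bounds $|\nabla\psi|\,e^{\frac{2\psi}{p+1}}\le C(1+t)^{-1/2}\sqrt{a}\,e^{\psi}$ and $\langle x\rangle^{2^k-1}e^{\frac{2\psi}{p+1}}\le C(1+t)^{(2^k-1+\frac{\alpha}{2})/(2-\alpha)}\sqrt{a}\,e^{\psi}$, each reducing via $y=\langle x\rangle^{2-\alpha}/(1+t)$ to $\sup_{y\ge 0}y^{m}e^{-cy}<\infty$, exactly as in the paper's computation. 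The only difference is presentational: you make explicit the lower bound $\psi\gtrsim \langle x\rangle^{2-\alpha}/(1+t)$ (absorbing the Newton-potential correction by the choice of $A_0$), which the paper uses implicitly in its ``$\le C$'' steps.
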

\begin{proof}
By Lemma \ref{CKN_ineq}, we have
\begin{align}%
\label{2_e_psi1}
	\| e^{\frac{2\psi}{p+1}} u \|_{L^2}
	&\le C(k,N) \| \nabla (e^{\frac{2\psi}{p+1}} u ) \|_{L^2}^{1-1/2^k}
			\| \langle x \rangle^{2^k-1} e^{\frac{2\psi}{p+1}} u \|_{L^2}^{1/2^k}
\end{align}%
with $k$ satisfying $2^k-1 \ge -\frac{\alpha}{2}$.
We estimate
\begin{align*}%
	\langle x \rangle^{2^k-1} e^{\frac{2\psi}{p+1}}
		&= \langle x \rangle^{-\frac{\alpha}{2}} \langle x \rangle^{2^k-1 + \frac{\alpha}{2}} e^{\frac{2\psi}{p+1}}\\
		&= \langle x \rangle^{-\frac{\alpha}{2}}
			\left( \frac{\langle x \rangle^{2-\alpha}}{1+t} \right)^{(2^k-1 + \frac{\alpha}{2})/(2-\alpha)}
				e^{\frac{2\psi}{p+1}} \cdot (1+t)^{(2^k-1 + \frac{\alpha}{2})/(2-\alpha)} \\
		&\le C(\alpha,p,k,N) (1+t)^{(2^k-1 + \frac{\alpha}{2})/(2-\alpha)}
			e^{\psi(t,x)} \sqrt{a(x)}.
\end{align*}%
On the other hand, we compute
\begin{align*}%
	\nabla ( e^{\frac{2\psi}{p+1}} u ) 
		= e^{\frac{2\psi}{p+1}}
			\left( \frac{2}{p+1} (\nabla \psi) u + \nabla u \right)
\end{align*}%
and
\begin{align*}%
	| \nabla \psi | e^{\frac{2\psi}{p+1}}
	&\le C \frac{\langle x \rangle^{1-\alpha}}{1+t} e^{\frac{2\psi}{p+1}} \\
	&\le C \langle x \rangle^{-\frac{\alpha}{2}}
			\left( \frac{\langle x \rangle^{2-\alpha}}{1+t} \right)^{(1-\frac{\alpha}{2})/(2-\alpha)}
				e^{\frac{2\psi}{p+1}} \cdot (1+t)^{(1-\frac{\alpha}{2})/(2-\alpha) -1} \\
	&\le C(1+t)^{-\frac12} e^{\psi(t,x)} \sqrt{a(x)}.
\end{align*}%
Plugging these estimates into \eqref{2_e_psi1}, we have the desired estimate.
\end{proof}

Combining Lemmas \ref{GN_ineq} and \ref{2_lem_int}, we obtain the following interpolation estimate.
\begin{lemma}\label{2_lem_int2}
Under the assumptions of Proposition \ref{2_prop_ap},
for any integer $k$ satisfying $2^k-1 \ge - \frac{\alpha}{2}$,
there exists a constant
$C(\alpha,p,k,N) > 0$ such that
for any $u \in H^1(\mathbb{R}^N)$ with compact support, we have
\begin{align*}%
	\| e^{\frac{2\psi}{p+1}} u \|_{L^{p+1}}
	&\le C(\alpha, p, k, N) 
		\left( (1+t)^{-\frac12} \| e^{\psi} \sqrt{a} u \|_{L^2} + \| e^{\psi} \nabla u\|_{L^2}
				\right)^{\theta + (1-1/2^k)(1-\theta)} \\
	&\quad \times
		\left( (1+t)^{(2^k-1+\frac{\alpha}{2})/(2-\alpha)} \| e^{\psi} \sqrt{a} u \|_{L^2} \right)^{(1-\theta)/2^k},
\end{align*}%
where $\theta = \frac{N(p-1)}{2(p+1)} \in [0,1]$.
\end{lemma}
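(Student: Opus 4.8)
The plan is to reduce everything to the Gagliardo--Nirenberg inequality (Lemma \ref{GN_ineq}) applied to the single weighted function $w := e^{\frac{2\psi}{p+1}}u$, and then to feed in the $L^2$-type bounds already available. Since $u$ has compact support and $\psi$ is smooth, $w \in H^1(\mathbb{R}^N)$ has compact support, so Lemma \ref{GN_ineq} gives
\begin{align*}
	\| e^{\frac{2\psi}{p+1}} u \|_{L^{p+1}} = \| w\|_{L^{p+1}} \le C \|\nabla w\|_{L^2}^{\theta}\, \| w\|_{L^2}^{1-\theta},
\end{align*}
with $\theta = \frac{N(p-1)}{2(p+1)}$. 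The $L^2$ factor $\| w\|_{L^2} = \| e^{\frac{2\psi}{p+1}} u\|_{L^2}$ is exactly the quantity estimated in Lemma \ref{2_lem_int}, so I would simply quote that bound for it.

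The remaining task is to control $\|\nabla w\|_{L^2}$ by the quantity $X := (1+t)^{-1/2}\| e^{\psi}\sqrt{a}\,u\|_{L^2} + \| e^{\psi}\nabla u\|_{L^2}$. Using the product rule,
\begin{align*}
	\nabla w = e^{\frac{2\psi}{p+1}}\Big( \tfrac{2}{p+1}(\nabla\psi)u + \nabla u \Big),
\end{align*}
so by the triangle inequality $\|\nabla w\|_{L^2}$ splits into a term carrying $(\nabla\psi)u$ and a term carrying $\nabla u$. For the first term I would reuse the pointwise bound $|\nabla\psi|\,e^{\frac{2\psi}{p+1}} \le C(1+t)^{-1/2} e^{\psi}\sqrt{a}$ already established inside the proof of Lemma \ref{2_lem_int}, giving $\|e^{\frac{2\psi}{p+1}}(\nabla\psi)u\|_{L^2} \le C(1+t)^{-1/2}\|e^{\psi}\sqrt{a}\,u\|_{L^2}$. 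For the second term I would use that $\psi \ge 0$ (which holds by the construction in Definition \ref{2_def_psi} once $A_0$ is large, since $\langle x\rangle^{2-\alpha}$ dominates the Newton-potential correction) together with $\frac{2}{p+1}<1$ for $p>1$, so that $e^{\frac{2\psi}{p+1}} \le e^{\psi}$ pointwise and hence $\|e^{\frac{2\psi}{p+1}}\nabla u\|_{L^2} \le \|e^{\psi}\nabla u\|_{L^2}$. Together these yield $\|\nabla w\|_{L^2}\le C X$.

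Finally I would substitute these two bounds into the Gagliardo--Nirenberg inequality. Writing $Y := (1+t)^{(2^k-1+\alpha/2)/(2-\alpha)}\|e^{\psi}\sqrt{a}\,u\|_{L^2}$, Lemma \ref{2_lem_int} reads $\|w\|_{L^2}\le C\, X^{1-1/2^k} Y^{1/2^k}$, so
\begin{align*}
	\|w\|_{L^{p+1}} \le C\, X^{\theta}\big( X^{1-1/2^k} Y^{1/2^k}\big)^{1-\theta} = C\, X^{\theta + (1-1/2^k)(1-\theta)}\, Y^{(1-\theta)/2^k},
\end{align*}
which is precisely the claimed estimate. I expect no serious obstacle here: the computation is essentially a bookkeeping exercise. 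The only points requiring care are tracking the exponents of $X$ and $Y$ through the interpolation, and reusing the $\nabla\psi$ bound so that the extra term produced by differentiating the weight is absorbed into the \emph{same} quantity $X$ that already appears in Lemma \ref{2_lem_int}, rather than generating a new weighted norm.
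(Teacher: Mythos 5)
Your proposal is correct and follows essentially the same route as the paper: apply the Gagliardo--Nirenberg inequality to $w = e^{\frac{2\psi}{p+1}}u$, control $\|\nabla w\|_{L^2}$ via the product rule together with the pointwise bound $|\nabla\psi|e^{\frac{2\psi}{p+1}} \le C(1+t)^{-1/2}e^{\psi}\sqrt{a}$ from the proof of Lemma \ref{2_lem_int}, and quote Lemma \ref{2_lem_int} for the $\|w\|_{L^2}$ factor, after which the exponents combine exactly as you computed. Your explicit justification that $e^{\frac{2\psi}{p+1}} \le e^{\psi}$ (from $\psi \ge 0$, which is indeed forced by \eqref{2_sowapsi_1} since $-\psi_t = \psi/(1+t)$, and $\frac{2}{p+1} < 1$) is a detail the paper leaves implicit, but otherwise the two arguments coincide.
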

\begin{proof}
By Lemmas \ref{GN_ineq} and \ref{2_lem_int}, we estimate
\begin{align*}%
	\| e^{\frac{2\psi}{p+1}} u \|_{L^{p+1}}
	&\le C \| \nabla (e^{\frac{2\psi}{p+1}} u) \|_{L^2}^{\theta} \| e^{\frac{2\psi}{p+1}} u \|_{L^2}^{1-\theta} \\
	&\le C \left( (1+t)^{-\frac12} \| e^{\psi}\sqrt{a}u \|_{L^2} + \| e^{\psi} \nabla u \|_{L^2} \right)^{\theta}
			\| e^{\frac{2\psi}{p+1}} u \|_{L^2}^{1-\theta} \\
	&\le C \left( (1+t)^{-\frac12} \| e^{\psi}\sqrt{a}u \|_{L^2} + \| e^{\psi} \nabla u \|_{L^2} \right)^{\theta} \\
	&\quad \times
		\left( (1+t)^{-\frac12} \| e^{\psi} \sqrt{a} u \|_{L^2} + \| e^{\psi} \nabla u\|_{L^2} \right)^{(1-1/2^k)(1-\theta)} \\
	&\quad \times
		\left( (1+t)^{(2^k-1+\frac{\alpha}{2})/(2-\alpha)} \| e^{\psi} \sqrt{a} u \|_{L^2} \right)^{(1/2^k)(1-\theta)}\\
	&= C \left( (1+t)^{-\frac12} \| e^{\psi} \sqrt{a} u \|_{L^2} + \| e^{\psi} \nabla u\|_{L^2}
				\right)^{\theta + (1-1/2^k)(1-\theta)} \\
	&\quad \times
		\left( (1+t)^{(2^k-1+\frac{\alpha}{2})/(2-\alpha)} \| e^{\psi} \sqrt{a} u \|_{L^2} \right)^{(1-\theta)/2^k},
\end{align*}%
which completes the proof.
\end{proof}

Now we are in a position to estimate the nonlinearities
\begin{align*}%
	&C (t_0+t)^{\frac{N-\alpha}{2-\alpha}+1-\delta_0}
		\int_{\mathbb{R}^N} e^{2\psi} | F(u) | \,dx\\
	&\quad + C \int_0^t (t_0 + \tau)^{\frac{N-\alpha}{2-\alpha}-\delta_0}
		\int_{\mathbb{R}^N} e^{2\psi} |F(u)|\,dx d\tau\\
	&\quad + C \int_0^t (t_0 + \tau)^{\frac{N-\alpha}{2-\alpha}+1-\delta_0}
		\int_{\mathbb{R}^N} e^{2\psi}(-\psi_t) |F(u)| \,dx d\tau \\
	&=: N_1 + N_2 + N_3
\end{align*}%
in the right-hand side of Lemma \ref{2_lem_en}.
We first consider $N_1$.
Applying Lemma \ref{2_lem_int2} and using the definition of $M(t)$ (see \eqref{mt}), we deduce
\begin{align*}%
	N_1 &\le 
		C (t_0+t)^{\frac{N-\alpha}{2-\alpha}+1-\delta_0} \| e^{\frac{2\psi}{p+1}}u \|_{L^{p+1}}^{p+1}\\
		&\le C (t_0+t)^{\frac{N-\alpha}{2-\alpha}+1-\delta_0}
			\left( (1+t)^{-\frac12} \| e^{\psi} \sqrt{a} u \|_{L^2} + \| e^{\psi} \nabla u\|_{L^2}
				\right)^{(\theta + (1-1/2^k)(1-\theta))(p+1)} \\
	&\quad \times
		\left( (1+t)^{(2^k-1+\frac{\alpha}{2})/(2-\alpha)} \| e^{\psi} \sqrt{a} u \|_{L^2} \right)^{(1-\theta)(p+1)/2^k} \\
	&\le C (t_0+t)^{\frac{N-\alpha}{2-\alpha}+1-\delta_0}
		(1+t)^{-\frac{1}{2}(\frac{N-\alpha}{2-\alpha}+1-\delta_0)(\theta + (1-1/2^k)(1-\theta))(p+1)}\\
	&\quad \times
		(1+t)^{[(2^k-1+\frac{\alpha}{2})/(2-\alpha) - \frac{1}{2}(\frac{N-\alpha}{2-\alpha}-\delta_0)](1-\theta)(p+1)/2^k}
		M(t)^{\frac{p+1}{2}}.
\end{align*}%
By a straightforward calculation, we can see that the condition
\begin{align*}%
	&\frac{N-\alpha}{2-\alpha}+1-\delta_0
	-\frac{1}{2} \left( \frac{N-\alpha}{2-\alpha}+1-\delta_0 \right) (\theta + \left( 1-\frac{1}{2^k} \right)(1-\theta))(p+1) \\
	&+\left[ \left( 2^k-1+\frac{\alpha}{2} \right)\frac{1}{2-\alpha} 
				- \frac{1}{2}(\frac{N-\alpha}{2-\alpha}-\delta_0) \right]
		(1-\theta)(p+1) \frac{1}{2^k}
	<0
\end{align*}%
if and only if
\begin{align}%
\label{2_p}
	p > 1+ \frac{2}{N-\alpha - (2-\alpha)\delta_0/2}.
\end{align}%
Noting $p > 1+ \frac{2}{N-\alpha}$ and
$\delta_0 < \frac{2}{2-\alpha}(N-\alpha-\frac{2}{p-1})$,
we see have \eqref{2_p} holds and hence,
\begin{align}%
\label{2_n1}
	N_1 \le C M(t)^{\frac{p+1}{2}}.
\end{align}%
We can obtain the same estimate as \eqref{2_n1} for $N_2$ in the same way.
Finally, for $N_3$, noting
\begin{align*}%
	(-\psi_t) e^{2\psi} &\le C \frac{\langle x \rangle^{2-\alpha}}{(1+t)^2} e^{2\psi} 
	\le C (1+t)^{-1} e^{\frac{p+3}{2}\psi},
\end{align*}%
we have
\begin{align*}%
	N_3 \le C \int_0^t (t_0+\tau)^{\frac{N-\alpha}{2-\alpha}-\delta_0}
			\int_{\mathbb{R}^N} e^{\frac{p+3}{2}\psi} |F(u)| \,dx d\tau.
\end{align*}%
We can apply the same argument to the right-hand side and obtain the
same estimate as \eqref{2_n1} for $N_3$.
This completes the proof of Proposition \ref{2_prop_ap}.

\section{Blow-up and the sharp upper estimates of the lifespan
in the subcritical and the critical case}
In this section, we give a proof of Theorem \ref{thm_bu}
for the case
$\alpha < 0, \beta =1$.
We can also prove Theorem \ref{thm_bu} for the case $\alpha < 0, \beta =0$
in the same argument with a slight modification
(see Remark \ref{rem_beta0} below).
The proof is based on the test-function method developed by
Ikeda and Sobajima \cite{IkeSopre1}.

First, we remark that
if $T(\varepsilon) \le R_0$, then the assertion of Theorem \ref{thm_bu} is obvious,
provided that
$\varepsilon \le 1$.
Thus, we may assume that $T(\varepsilon) > R_0$.
Let $\eta=\eta(s)$ be a test function such that
\begin{align*}%
	\eta(s) = \begin{cases}
		1 &\mbox{if}\ s\le \frac12,\\
		\mbox{decreasing} &\mbox{if} \ \frac12 < s < 1,\\
		0&\mbox{if} \ s \ge 1.
	\end{cases}
\end{align*}%
Let
$R \in [R_0,T(\varepsilon))$
a parameter and,
we define
\begin{align*}%
	\psi_R(t,x) = \left[ \eta\left( \frac{|x|^{2-\alpha}+t^2}{R^2} \right) \right]^{2p'}.
\end{align*}%
We also define
\begin{align*}%
	\eta^{\ast}(s) = \begin{cases}
		0&\mbox{if} \ s \le \frac12,\\
		\eta(s)&\mbox{if} \ s>\frac12,
	\end{cases}
	\quad
	\psi_R^{\ast}(t,x) = \left[ \eta^{\ast}\left( \frac{|x|^{2-\alpha}+t^2}{R^2} \right) \right]^{2p'}.
\end{align*}%
Then, we have
\begin{align*}%
	|\partial_t \psi_R(t,,x)| &\le CR^{-2} (1+t) \left[ \psi_R^{\ast}(t,x) \right]^{1-\frac{1}{2p'}},\\
	|\partial_t^2 \psi_R(t,x)| &\le CR^{-2} \left[ \psi_R^{\ast}(t,x) \right]^{\frac{1}{p}},\\
	|\Delta \psi_R(t,x)| &\le C R^{-2} \langle x \rangle^{-\alpha} \left[ \psi_R^{\ast}(t,x) \right]^{\frac{1}{p}}
\end{align*}%
and
\begin{align*}%
	\langle x \rangle^{-\alpha} \le C R^{-\frac{2\alpha}{2-\alpha}} \quad
	\mbox{on}\ \mbox{supp}\, \psi_R, \ \mbox{for}\ R\ge R_0.
\end{align*}%
Finally, we define
\begin{align*}%
	\Psi_R(t,x) = (1+t) \psi_R(t,x).
\end{align*}%
Multiplying the equation \eqref{dw} by
$\Psi_R$ and integrating it over $\mathbb{R}^N$, we have
\begin{align*}%
	\int_{\mathbb{R}^N} |u|^p \Psi_R\, dx
	&= \int_{\mathbb{R}^N} \left(
			u_{tt} - \Delta u + \frac{a(x)}{1+t} u_t \right) \Psi_R \,dx.
\end{align*}%
By integration by parts, we calculate
\begin{align*}%
	\int_{\mathbb{R}^N} |u|^p \Psi_R\, dx
	&= \frac{d}{dt}
		\int_{\mathbb{R}^N} \left( u_t \Psi_R - u\partial_t \Psi_R + \frac{a(x)}{1+t} u \Psi_R \right) \,dx \\
	&\quad + \int_{\mathbb{R}^N} u
			\left( \partial_t^2\Psi_R - \Delta \Psi_R - \partial_t \left( \frac{a(x)}{1+t}\Psi_R \right) \right)
			\,dx \\
	&= \frac{d}{dt} \int_{\mathbb{R}^N} \left( u_t \Psi_R - u\partial_t \Psi_R + \frac{a(x)}{1+t} u \Psi_R \right) \,dx \\
	&\quad + \int_{\mathbb{R}^N}
		u \left( 2 \partial_t \psi_R + (1+t) \partial_t^2\psi_R + (1+t) \Delta \psi_R- a(x) \partial_t \psi_R \right) \,dx \\
	&\le \frac{d}{dt} \int_{\mathbb{R}^N} \left( u_t \Psi_R - u\partial_t \Psi_R + \frac{a(x)}{1+t} u \Psi_R \right) \,dx \\
	&\quad + CR^{-2} \int_{\mathbb{R}^N}
				|u| \left( 1+\langle x \rangle^{-\alpha} \right) (1+t) [\psi_R^{\ast} ]^{\frac{1}{p}}
				\,dx \\
	&\le \frac{d}{dt} \int_{\mathbb{R}^N} \left( u_t \Psi_R - u\partial_t \Psi_R + \frac{a(x)}{1+t} u \Psi_R \right) \,dx \\
	&\quad + CR^{-\frac{4}{2-\alpha}}
			\int_{\mathbb{R}^N} |u| (1+t) [\psi_R^{\ast}]^{\frac{1}{p}} \,dx.
\end{align*}%
Integrating it over $[0,R]$ and applying the H\"{o}lder inequality, we have
\begin{align}%
\label{3_ineq}
	&\varepsilon \int_{\mathbb{R}^N} (u_1(x) + (a(x)-1) u_0(x))\,dx
		+ \iint_{\mathbb{R}^N\times (0,R)} |u|^p (1+t) \psi_R\,dxdt \\
\notag
	&\le
		CR^{-\frac{4}{2-\alpha}}
		\left( \iint_{\mathbb{R}^N\times (0,R)} |u|^p (1+t) \psi_R^{\ast} \,dxdt\right)^{\frac{1}{p}}
		\left( \iint_{|x|^{2-\alpha}+t^2\le R^2} (1+t) \,dxdt \right)^{\frac{1}{p'}} \\
\notag
	&\le CR^{-\frac{4}{2-\alpha} (\frac{1}{p-1} - \frac{N-\alpha}{2})\frac{1}{p'}}
		\left( \iint_{\mathbb{R}^N\times (0,R)} |u|^p (1+t) \psi_R^{\ast} \,dxdt\right)^{\frac{1}{p}}.
\end{align}%

In the subcritical case
$1<p<1+\frac{2}{N-\alpha}$,
from $\psi_R^{\ast} \le \psi_R$ and the Young inequality, we obtain
\begin{align*}%
	\varepsilon \int_{\mathbb{R}^N} (u_1(x) + (a(x)-1) u_0(x))\,dx
	&\le CR^{-\frac{4}{2-\alpha} (\frac{1}{p-1} - \frac{N-\alpha}{2})}.
\end{align*}%
By the assumption of Theorem \ref{thm_bu}, the left-hand side is bounded from below by
$C \varepsilon$.
Therefore, we have
\begin{align*}%
	R \le C \varepsilon^{-\frac{2-\alpha}{4}(\frac{1}{p-1}-\frac{N-\alpha}{2})^{-1}}.
\end{align*}%
Since $R$ is arbitrary in $[1, T(\varepsilon))$, we obtain the desired estimate for $T(\varepsilon)$.

In the critical case $p=1+\frac{2}{N-\alpha}$, we define
\begin{align*}%
	Y(\rho)
	= \int_0^{\rho} \left( \iint_{\mathbb{R}^N\times (0,R)} |u|^p(1+t)\psi_R^{\ast}\,dxdt \right) R^{-1}\,dR.
\end{align*}%
We note that the changing variable
$s = \frac{\sqrt{|x|^{2-\alpha}+t^2}}{R}$
implies
\begin{align*}%
	Y(\rho) &\le \iint_{\mathbb{R}^N \times (0,\rho)} |u|^p (1+t)
				\left(
				\int_0^{\rho} \left[ \eta^{\ast}\left( \frac{|x|^{2-\alpha}+t^2}{R^2} \right) \right]^{2p'} R^{-1}\,dR
				\right)
			\,dxdt \\
		&= \iint_{\mathbb{R}^N \times (0,\rho)} |u|^p(1+t)
			\left( \int_{\frac{\sqrt{|x|^{2-\alpha}+t^2}}{\rho}}^{\infty}
				\left[ \eta^{\ast}(s^2) \right]^{2p'} s^{-1}\,ds \right)
			\,dxdt.
\end{align*}%
Here, noting
${\rm supp}\,\eta^{\ast} \subset [\frac12, 1]$,
$\eta^{\ast} \le \eta$
and $\eta$ is decreasing, we estimate
\begin{align*}%
	\int_{\frac{\sqrt{|x|^{2-\alpha}+t^2}}{\rho}}^{\infty}
				\left[ \eta^{\ast}(s^2) \right]^{2p'} s^{-1}\,ds
	\le \log 2 \left[ \eta\left( \frac{|x|^{2-\alpha}+t^2}{\rho^2} \right) \right]^{2p'}.
\end{align*}%
Hence, we obtain
\begin{align*}%
	Y(\rho) \le \log 2 \iint_{\mathbb{R}^N \times (0,\rho)} |u|^p(1+t) \psi_{\rho} \,dxdt.
\end{align*}%
Combining this with \eqref{3_ineq}, we have the differential inequality of $Y(R)$
\begin{align*}%
	\left( \varepsilon \int_{\mathbb{R}^N} (u_1(x) + (a(x)-1) u_0(x))\,dx + Y(R) \right)^p
	\le C R Y'(R)
\end{align*}%
for $R\ge 1$.
Noting $Y(1)=0$ and solving the above, we conclude
\begin{align*}%
	\log R \le C  \left( \varepsilon \int_{\mathbb{R}^N} (u_1(x) + (a(x)-1) u_0(x))\,dx \right)^{-(p-1)}.
\end{align*}%
Since $R$ is arbitrary in $[1, T(\varepsilon))$, we obtain the desired estimate for $T(\varepsilon)$.

\begin{remark}\label{rem_beta0}
In the case $\alpha < 0, \beta=0$,
we modify the definition of
$\psi_R (t,x)$ and $\psi_R^{\ast}(t,x)$
by
\begin{align*}%
	\psi_R(t,x) = \left[ \eta\left( \frac{|x|^{2-\alpha}+t}{R} \right) \right]^{2p'},\quad
	\psi_R^{\ast}(t,x) = \left[ \eta^{\ast}\left( \frac{|x|^{2-\alpha}+t}{R} \right) \right]^{2p'},
\end{align*}%
and we use
$\psi_R$ itself instead of $\Psi_R$.
Then, corresponding to \eqref{3_ineq}, we can prove
\begin{align*}%
	&\varepsilon \int_{\mathbb{R}^N} (u_1(x) + a(x) u_0(x))\,dx
		+ \iint_{\mathbb{R}^N\times (0,R)} |u|^p \psi_R\,dxdt \\
	&\le CR^{-\frac{2}{2-\alpha} (\frac{1}{p-1} - \frac{N-\alpha}{2})\frac{1}{p'}}
		\left( \iint_{\mathbb{R}^N\times (0,R)} |u|^p \psi_R^{\ast} \,dxdt\right)^{\frac{1}{p}}.
\end{align*}%
From this, one has the assertion of Theorem \ref{thm_bu} for $\alpha<0, \beta = 0$
in the same manner.
\end{remark}

%
%
%
%

\appendix
\section{Short proof of a special case of the Caffarelli--Kohn--Nirenberg inequality}
We give a short proof of Lemma \ref{CKN_ineq}.
For any $\gamma \ge 0$, by the integration by parts, we compute
\begin{align*}%
	\int_{\mathbb{R}^N} \nabla\cdot (|x|^{\gamma}x) u^2 \,dx
	&= -2 \int_{\mathbb{R}^N} |x|^{\gamma} ( x \cdot \nabla u) u \,dx\\
	&\le 2 \| \nabla u \|_{L^2} \| |x|^{\gamma+1} u \|_{L^2},
\end{align*}%
which leads to
\begin{align}%
\label{app_1}
	\| |x|^{{\gamma}/{2}} u \|_{L^2}^2 \le C(\gamma, N) \| \nabla u\|_{L^2} \| |x|^{\gamma+1} u \|_{L^2}.
\end{align}%
Taking $\gamma = 0$ and $\gamma =2$, we have
\begin{align}%
\label{app_2}
	\| u \|_{L^2}^2 \le C(N) \| \nabla u\|_{L^2} \| |x| u \|_{L^2}
\end{align}%
and
\begin{align}%
\label{app_3}
	\| |x| u \|_{L^2}^2 \le C(\gamma, N) \| \nabla u\|_{L^2} \| |x|^3 u \|_{L^2},
\end{align}%
respectively.
The inequality \eqref{app_2} gives the assertion for $k=1$.
Next, combining \eqref{app_2} and \eqref{app_3}, we deduce
\begin{align}%
\label{app_4}
	\| u \|_{L^2}^2 \le C(N) \| \nabla u\|_{L^2}^{3/4} \| |x|^3 u \|_{L^2}^{1/4},
\end{align}%
which gives the assertion for $k=2$.
Furthermore, taking $\gamma = 6$ in \eqref{app_1}, we have
\begin{align*}%
	\| |x|^3 u \|_{L^2}^2 \le C(N) \| \nabla u \|_{L^2} \| |x|^7 u \|_{L^2}.
\end{align*}%
This and \eqref{app_4} imply
\begin{align*}%
	\| u \|_{L^2} \le C(N) \| \nabla u \|_{L^2}^{7/8} \| |x|^7 u \|_{L^2}^{1/8},
\end{align*}%
which proves the assertion for $k=3$.
Repeating this argument, we obtain the desired estimate.


\section{Sketch of a priori estimate in case of time and space dependent coefficient damping}

Under Proposition \ref{prop_lwp},
we here sketch the proof of the following proposition, corresponding to Proposition \ref{2_prop_ap}, assuming
(i) $\alpha<0, \ -1<\beta<1$ or
(ii) $\alpha<0, \ \beta=1$ with
$a_0\gg 1$.
Then we can reach to Theorems \ref{thm_gwp}--\ref{thm_gwp_b1}, using usual procedure of the energy method.

\begin{proposition}\label{prop_app2}
Under the assumptions of Theorems \ref{thm_gwp}--\ref{thm_gwp_b1},
there exist
$\delta_0 \in (0,\frac{(N-\alpha)(1+\beta)}{2-\alpha})$,
and constants 
$t_0=t_0(a_0, R_0,\delta_0)\ge 1$,
$C=C(N,\alpha,\beta,a_0,R_0,\delta_0,t_0)$
such that the solution $u$ constructed in Proposition 1.1 satisfies the a priori estimate
\begin{align*}
   M^{(\beta)}(t) \le CM^{(\beta)}(0) + CM^{(\beta)}(t)^{\frac{p+1}{2}}
\end{align*}
for
$t \in [0,T(\varepsilon))$,
where
$\psi^{(\beta)}$
and
$M^{(\beta)}(t)$
are defined in \eqref{A5} and \eqref{A6} with
$\delta = \frac{2-\alpha}{2(N-\alpha)(1+\beta)}\delta_0$
below, respectively.
\end{proposition}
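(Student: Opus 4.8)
The plan is to repeat, almost verbatim, the three-step scheme of Section~2—construction of a weight function, a weighted energy inequality, and a nonlinear estimate closed by interpolation—while carrying the time factor $b(t)=(1+t)^{-\beta}$ through every computation. First I would replace the weight of Definition~\ref{2_def_psi} by its time-dependent version
\begin{align*}
	\psi^{(\beta)}(t,x) = \frac{\mu}{(1+t)^{1+\beta}}
		\left\{ \langle x \rangle^{2-\alpha} + A_0
			- \mathcal{N}\ast \left( \alpha(2-\alpha)\langle x \rangle^{-2-\alpha}\eta_{R_{\delta}}(x)\right) \right\},
	\qquad \mu = \frac{(1+\beta)a_0}{(2-\alpha)^2(2+\delta)},
\end{align*}
which differs from $\psi$ only by the exponent $1+\beta$ in the time factor and by the rescaling of $\mu$. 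Since $-\psi^{(\beta)}_t = \frac{1+\beta}{1+t}\psi^{(\beta)}$, the Newton-potential correction controls the sign-changing term $\alpha(2-\alpha)\langle x\rangle^{-2-\alpha}$ exactly as in Lemma~\ref{2_lem_psi}, and I would prove the counterparts of \eqref{2_sowapsi_1}--\eqref{2_sowapsi_2},
\begin{align*}
	-\psi^{(\beta)}_t(t,x)\,a(x)b(t) &\ge (2+\delta_1)\,|\nabla\psi^{(\beta)}(t,x)|^2, \\
	\Delta\psi^{(\beta)}(t,x) &\ge \left( \frac{(1+\beta)(N-\alpha)}{2(2-\alpha)} - \delta_2 \right)\frac{a(x)b(t)}{1+t},
\end{align*}
with $\delta_1=\frac23\delta$ and $\delta_2=\frac{(1+\beta)(N-\alpha)}{2(2-\alpha)}\delta$; the prescribed choice $\delta=\frac{2-\alpha}{2(N-\alpha)(1+\beta)}\delta_0$ again yields the balance $4\delta_2=\delta_0$ that cancels the lower-order $L^2$ contributions.

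With these two inequalities I would set the weighted quantity
\begin{align*}
	M^{(\beta)}(t) := \sup_{0\le\tau<t}
		&\left\{ (1+\tau)^{\frac{(N-\alpha)(1+\beta)}{2-\alpha}+(1+\beta)-\delta_0}
			\int_{\mathbb{R}^N} e^{2\psi^{(\beta)}}\bigl(u_t^2+|\nabla u|^2\bigr)\,dx \right. \\
		&\left. \quad +(1+\tau)^{\frac{(N-\alpha)(1+\beta)}{2-\alpha}-\delta_0}
			\int_{\mathbb{R}^N} e^{2\psi^{(\beta)}}a(x)u^2\,dx \right\},
\end{align*}
whose decay exponent $\frac{(N-\alpha)(1+\beta)}{2-\alpha}$ is dictated by the parabolic scaling $\langle x\rangle\sim(1+t)^{(1+\beta)/(2-\alpha)}$ and reduces to \eqref{mt} when $\beta=0$. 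Multiplying \eqref{dw} by $e^{2\psi^{(\beta)}}u_t$ and by $e^{2\psi^{(\beta)}}u$ gives the same two identities as in the proof of Lemma~\ref{2_lem_en}, now with $a(x)b(t)$ in place of $a(x)$; integrating these against the time weights above and combining them with a small parameter $\nu$ reproduces the energy inequality of Lemma~\ref{2_lem_en}. The nonlinear terms $N_1,N_2,N_3$ are then bounded by $CM^{(\beta)}(t)^{(p+1)/2}$ using Lemmas~\ref{GN_ineq}--\ref{2_lem_int2} essentially unchanged: the Caffarelli--Kohn--Nirenberg and Gagliardo--Nirenberg inequalities are time independent, and only the explicit $(1+t)$-powers in the interpolation Lemma~\ref{2_lem_int} pick up the factor $1+\beta$ coming from the new time scaling of $\psi^{(\beta)}$. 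A direct check shows that the resulting time exponent is negative precisely when $p>1+\frac{2}{N-\alpha}$, i.e.\ under the supercriticality hypothesis \eqref{supercritical}.

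The step I expect to be the genuine obstacle is closing the energy inequality in the scale-invariant case $\beta=1$. In the proof of Lemma~\ref{2_lem_en} the lower-order term produced when the energy time weight is differentiated is absorbed into the damping by enlarging $t_0$; this is exactly the purpose of the conditions collected in Remark~\ref{rem_parameters}. With the genuine damping $a(x)b(t)=a_0\langle x\rangle^{-\alpha}(1+t)^{-\beta}$ and $\alpha<0$ (so that $\langle x\rangle^{-\alpha}\ge1$ and $t_0\ge1$ give $(t_0+t)(1+t)^{-\beta}\ge(t_0+t)^{1-\beta}$), this absorption reduces to an inequality of the form
\begin{align*}
	\frac{(N-\alpha)(1+\beta)}{2-\alpha}+(1+\beta)-\delta_0 \;\le\; \frac{a_0}{8}\,(t_0+t)^{1-\beta}.
\end{align*}
For $-1<\beta<1$ the right-hand side grows in $t$, so a large $t_0$ still suffices and the a priori estimate—hence Theorem~\ref{thm_gwp}—holds for every $a_0>0$. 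For $\beta=1$, however, the exponent $1-\beta$ vanishes and the right-hand side collapses to the constant $a_0/8$; enlarging $t_0$ no longer helps, and one is forced to require $a_0\ge a_{\ast}$ with $a_{\ast}$ large enough that $a_0/8$ dominates the left-hand side together with the companion constants entering the $uu_t$ and $\psi_t uu_t$ Schwarz estimates. This large-damping threshold is precisely the hypothesis $a_0\ge a_{\ast}$ of Theorem~\ref{thm_gwp_b1}, and it is the only structural difference between the cases $\beta\in(-1,1)$ and $\beta=1$.
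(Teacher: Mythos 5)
Your proposal is correct and follows essentially the same route as the paper's Appendix B: the same rescaled weight (your $\frac{\mu(1+\beta)}{(1+t)^{1+\beta}}$ is exactly the paper's $\frac{\mu}{B(t)}$ in \eqref{A5}), the same two pointwise inequalities for $\psi^{(\beta)}$ with $\delta_1=\frac23\delta$ and $\delta_2=\frac{(N-\alpha)(1+\beta)}{2(2-\alpha)}\delta$, the same pair of multiplier identities combined with a small $\nu$, the same Gagliardo--Nirenberg/Caffarelli--Kohn--Nirenberg interpolation closing the nonlinear terms under \eqref{supercritical}, and the same key observation (the paper's \eqref{A10}, $(t_0+t)c(t,x)\ge a_0t_0^{1-\beta_+}$) that for $\beta\in(-1,1)$ one absorbs the bad terms by enlarging $t_0$ whereas for $\beta=1$ this degenerates and forces $a_0\ge a_\ast$. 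The only divergence is that your time-weight exponent $\frac{(N-\alpha)(1+\beta)}{2-\alpha}$ in $M^{(\beta)}$ differs from the printed exponent $\frac{(N-\alpha)(1+\beta)}{2(2-\alpha)}$ in \eqref{A6}, but yours is the one that reduces to \eqref{mt} at $\beta=0$, makes the cancellation $\frac{\delta_0}{2}-\delta_2=\delta_2$ (including the extra $\frac{\beta}{2}$ term of \eqref{A8}) come out exactly, and agrees with the paper's own nonlinear estimate where the exponent is factored as $(1+\beta)\bigl[\frac{N-\alpha}{2-\alpha}+1-\frac{\delta_0}{1+\beta}+\cdots\bigr]$, so the printed \eqref{A6} is evidently a typo and your normalization is the intended one.
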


First, the function $\psi^{(\beta)}$ is defined by
\begin{align}
\label{A5}
	\psi^{(\beta)}(t,x) &=
		\frac{\mu}{B(t)}
		\left\{
			\langle x \rangle^{2-\alpha}+ A_0
			-N \ast \left( \alpha(2-\alpha)\langle x\rangle^{-2-\alpha} \eta_{R_{\delta}}(x) \right)
			\right\},
\end{align}
where
$B(t) = \frac{1}{1+\beta} + \int_0^t \frac{d\tau}{b(\tau)} = \frac{1}{1+\beta}(1+t)^{1+\beta}$.
Since $\psi^{(0)}$ satisfies \eqref{2_sowapsi_1}--\eqref{2_sowapsi_2} in Lemma \ref{2_lem_psi},
and since $\psi^{(\beta)}= \frac{1+t}{B(t)} \psi^{(0)}$
and $(\frac{1}{B(t)})'= - \frac{(1+t)^{\beta}}{B(t)^2}$,
\begin{align*}
	&-c(t,x)\psi_t^{(\beta)}
		= - c(t,x) \frac{(1+t)^{\beta}}{B(t)^2} (1+t)^2 \psi_t^{(0)}
		\ge - \left( \frac{1+t}{B(t)} \right)^2 a(x) \psi_t^{(0)} \\
	&\ge \left( \frac{1+t}{B(t)} \right)^2 (2+\delta_1^{(0)}) |\nabla \psi^{(0)}|^2
		= (2+\delta_1^{(0)})|\nabla \psi^{(\beta)}|^2
\end{align*}
and
\begin{align*}
	\Delta \psi^{(\beta)}
	&= \frac{1+t}{B(t)}\Delta \psi^{(0)}
	\ge \frac{1+\beta}{(1+t)^{\beta}}
		\left( \frac{N-\alpha}{2(2-\alpha)}-\delta_2^{(0)} \right) \frac{a(x)}{1+t}\\
	&= \left( \frac{(N-\alpha)(1+\beta)}{2(2-\alpha)}-\delta_2^{(\beta)} \right) \frac{c(t,x)}{1+t}.
\end{align*}
Here $\delta_1^{(\beta)}=\frac23 \delta$,
$\delta_2^{(\beta)}=\frac{(N-\alpha)(1+\beta)}{2(2-\alpha)}\delta$.
Hence, samely as in Lemma \ref{2_lem_psi}, it holds that
\begin{align*}%
	-\psi_t(t,x) c(t,x) &\ge (2+\delta_1^{(\beta)})|\nabla \psi(t,x)|^2, \\
	\Delta \psi(t,x) &\ge \left( \frac{(N-\alpha)(1+\beta)}{2(2-\alpha)}-\delta_2^{(\beta)} \right) \frac{c(t,x)}{1+t}.
\end{align*}%
Next, since the number $\frac{(N-\alpha)(1+\beta)}{2(2-\alpha)}$ suggests the decay rate, for the weighted energy estimate we define $M^{(\beta)}$ by
\begin{align}
\label{A6}
	M^{(\beta)}(t)
	&= \sup_{0<\tau<t}
		\left\{ (t_0+\tau)^{\frac{(N-\alpha)(1+\beta)}{2(2-\alpha)} + \beta +1 -\delta_0}
			\int_{{\mathbb R}^N} e^{2\psi}(u_t^2+ |\nabla u|^2)\,dx \right. \\
\nonumber
	&\qquad\qquad \left. + 
	(t_0+\tau)^{\frac{(N-\alpha)(1+\beta)}{2(2-\alpha)}-\delta_0} \int_{{\mathbb R}^N} e^{2\psi} a(x)u^2 \,dx \right\}, 
\end{align}
where $t_0$ is suitably large number and $\delta_0 \ge 4 \delta_2$. 

Here and after, we abbreviate the suffix
$(\beta)$ for $\psi^{(\beta)}$, $\delta^{(\beta)}$ and $M^{(\beta)}$.
Multiplying (1.1) by $e^{2\psi}u_t$ and $e^{2\psi}u$, we respectively have
\begin{align}
\label{A7}
	&\frac{\partial}{\partial t} \left[ \frac{e^{2\psi}}{2} (u_t^2+|\nabla u|^2) \right]
	- \nabla \cdot \left( e^{2\psi} u_t \nabla u \right) 
 	+ e^{2\psi} \left\{ \left( \frac14 c(t,x) -\psi_t \right) u_t^2 + \frac{-\psi_t}{5} |\nabla u|^2 \right\} \\
\nonumber
	&\le \frac{\partial}{\partial t} [e^{2\psi} F(u)] + 2e^{2\psi}(-\psi_t)F(u),
	\quad \mbox{where}\quad F(u)=\frac{1}{p+1} |u|^p u,
\end{align}
and
\begin{align}
\label{A8}
	&\frac{\partial}{\partial t} \left[ e^{2\psi} \left( u u_t + \frac{c(t,x)}{2} u^2 \right) \right]
		-\Delta \left( \frac{e^{2\psi}}{2} u^2 \right)
		+ e^{2\psi} \left( \delta_5 |\nabla u|^2 + \frac{\delta}{8}(-\psi_t)c(t,x)u^2 \right) \\
\nonumber
	&\quad + \left( \frac{(N-\alpha)(1+\beta)}{2(2-\alpha)} + \frac{\beta}{2} -\delta_2 \right)
			e^{2\psi}\frac{c(t,x)}{1+t}u^2
		- e^{2\psi} \left( 1+\frac{8(-\psi_t)}{\delta c(t,x)} \right) u_t^2 \\
\nonumber
 	&\le e^{2\psi} |u|^p u, 
\end{align}
since
$c(t,x)\frac{\partial}{\partial t}\frac{u^2}{2}
= \frac{\partial}{\partial t} \frac{c(t,x)}{2}u^2 + \frac{\beta c(t,x)}{2(1+t)} u^2$,
which is only different from the case
$\beta=0$,
though
$a(x)$
is changed to
$c(t, x)$.
So, note that
$\delta_i (i \ge 3)$
is the same as one in Section 2 (cf. Remark 2.2).
Integrating \eqref{A7} over ${\mathbb R}^N$ and multiplying it by
$(t_0+t)^{\frac{(N-\alpha)(1+\beta)}{2(2-\alpha)}+(1+\beta)-\delta_0}$, we have
\begin{align}
\label{A9}
	&\frac{d}{dt} \left[ (t_0+t)^{\frac{(N-\alpha)(1+\beta)}{2(2-\alpha)}+(1+\beta)-\delta_0}
		\int_{{\mathbb R}^N} \frac{e^{2\psi}}{2}(u_t^2 + |\nabla u|^2) \,dx \right] \\
\nonumber
	&\quad  - \left\{ \frac{(N-\alpha)(1+\beta)}{2(2-\alpha)}+(1+\beta)-\delta_0 \right\}
		(t_0+t)^{\frac{(N-\alpha)(1+\beta)}{2(2-\alpha)}+\beta-\delta_0}
			\int_{{\mathbb R}^N} \frac{e^{2\psi}}{2}(u_t^2+|\nabla u|^2)\,dx \\
\nonumber
	&\quad +(t_0+t)^{\frac{(N-\alpha)(1+\beta)}{2(2-\alpha)}+(1+\beta)-\delta_0}
		\int_{{\mathbb R}^N} e^{2\psi} \left\{ (\frac14 c(t,x) -\psi_t)u_t^2 +\frac{-\psi_t}{5}|\nabla u|^2 \right\} \,dx \\
\nonumber
	&\le \frac{d}{dt} \left[ (t_0+t)^{\frac{(N-\alpha)(1+\beta)}{2(2-\alpha)}+(1+\beta)-\delta_0}
		\int_{{\mathbb R}^N} e^{2\psi} F(u)\,dx \right] \\
\nonumber
	&\quad - \left\{
		\frac{(N-\alpha)(1+\beta)}{2(2-\alpha)}+(1+\beta)-\delta_0 \right\}
		(t_0+t)^{\frac{(N-\alpha)(1+\beta)}{2(2-\alpha)}+\beta-\delta_0}
		\int_{{\mathbb R}^N} e^{2\psi} F(u)\,dx \\
\nonumber
	&\quad  + 2(t_0+t)^{\frac{(N-\alpha)(1+\beta)}{2(2-\alpha)}+(1+\beta)-\delta_0}
		\int_{{\mathbb R}^N} e^{2\psi} (-\psi_t)F(u)\,dx.
\end{align}
Since 
\begin{align}
\label{A10}
	(t_0+t)c(t,x) &= (t_0+t) a_0\langle x \rangle^{-\alpha}(1+t)^{-\beta} \ge a_0t_0^{1-\beta_+} \gg 1,
\end{align}
if (i) or (ii) holds, the term
$\int_{{\mathbb R}^N}\frac{e^{2\psi}}{2} u_t^2$
in the second term in \eqref{A9} is absorbed into
$\int_{{\mathbb R}^N} e^{2\psi}\frac14 c(t,x)u_t^2\,dx$,
where
$\beta_+ = 0\ (\beta\le 0), \, \beta_+=\beta\ (0<\beta \le 1)$.
Hence, integration of \eqref{A9} over $[0,t]$ yields 
\begin{align}
\label{A11}
	&(t_0+t)^{\frac{(N-\alpha)(1+\beta)}{2(2-\alpha)}+(1+\beta)-\delta_0}
		\int_{{\mathbb R}^N} \frac{e^{2\psi}}{2} (u_t^2+|\nabla u|^2)\,dx \\
\nonumber
	&\quad +\int_0^t (t_0+\tau)^{\frac{(N-\alpha)(1+\beta)}{2(2-\alpha)}+(1+\beta)-\delta_0}
		\int_{{\mathbb R}^N} e^{2\psi}
			\left\{ \left( \frac18 c(\tau,x) -\psi_t \right) u_t^2 + \frac{-\psi_t}{5} |\nabla u|^2 \right\} \, dx \,d\tau \\
\nonumber
	&\quad - \left\{\frac{(N-\alpha)(1+\beta)}{2(2-\alpha)}+(1+\beta)-\delta_0 \right\}
		\int_0^t (t_0+\tau)^{\frac{(N-\alpha)(1+\beta)}{2(2-\alpha)}+(1+\beta)-\delta_0}
			\int_{{\mathbb R}^N} \frac{e^{2\psi}}{2} |\nabla u|^2\,dx \,d\tau \\
\nonumber
	&\le C(t_0)\int_{{\mathbb R}^N} e^{2\psi(0,x)} (u_1^2+|\nabla u_0|^2 + |u_0|^{p+1})\,dx \\
\nonumber
	&\quad + (t_0+t)^{\frac{(N-\alpha)(1+\beta)}{2(2-\alpha)}+(1+\beta)-\delta_0}
		\int_{{\mathbb R}^N} e^{2\psi} F(u)\,dx \\
\nonumber
	&\quad - \left\{ \frac{(N-\alpha)(1+\beta)}{2(2-\alpha)}+(1+\beta)-\delta_0 \right\}
		\int_0^t(t_0+\tau)^{\frac{(N-\alpha)(1+\beta)}{2(2-\alpha)}+\beta-\delta_0}
			\int_{{\mathbb R}^N} e^{2\psi} F(u)\,dx \,d\tau \\
\nonumber
	&\quad  + 2 \int_0^t (t_0+\tau)^{\frac{(N-\alpha)(1+\beta)}{2(2-\alpha)}+(1+\beta)-\delta_0}
		\int_{{\mathbb R}^N} e^{2\psi} (-\psi_t) F(u)\,dx \,d\tau.
\end{align}
Similarly, integrating \eqref{A8} over
${\mathbb R}^N \times [0,t]$, we have
\begin{align}
\label{A12}
	&(t_0+t)^{\frac{(N-\alpha)(1+\beta)}{2(2-\alpha)}+\beta-\delta_0}
		\int_{{\mathbb R}^N} e^{2\psi} \left( u u_t + \frac{c(t,x)}{2} u^2 \right)\,dx \\
\nonumber
	&\quad + \int_0^t (t_0+\tau)^{\frac{(N-\alpha)(1+\beta)}{2(2-\alpha)}+\beta-\delta_0}
		\int_{{\mathbb R}^N} e^{2\psi}
				\left( \delta_5 |\nabla u|^2 + \frac{\delta}{8} (-\psi_t)c(\tau,x)u^2 \right) \,dx \,d\tau \\
\nonumber
	&\quad +\delta_6 \int_0^t (t_0+\tau)^{\frac{(N-\alpha)(1+\beta)}{2(2-\alpha)}+\beta-1-\delta_0}
		\int_{{\mathbb R}^N} e^{2\psi} c(\tau,x)u^2 \,dx \,d\tau \\
\nonumber
	&\quad -(1+\frac{C(\delta_6)}{a_0 t_0^{1-\beta_+}})
		\int_0^t (t_0+\tau)^{\frac{(N-\alpha)(1+\beta)}{2(2-\alpha)}+\beta-\delta_0}
		\int_{{\mathbb R}^N} e^{2\psi} u_t^2\,dx \,d\tau \\
\nonumber
	&\quad -\frac{8}{\delta} \int_0^t (t_0+\tau)^{\frac{(N-\alpha)(1+\beta)}{2(2-\alpha)}+\beta-\delta_0}
		\int_{{\mathbb R}^N} e^{2\psi} \frac{-\psi_t}{c(\tau,x)} u_t^2 \,dx \,d\tau \\
\nonumber
	&\le C(\alpha,\beta,t_0) \int_{{\mathbf R}^N} e^{2\psi(0,x)} \left( u_0 u_1 + \frac{c(0,x)}{2}u_0^2 \right)\,dx \\
\nonumber
	&\quad + \int_0^t (t_0+\tau)^{\frac{(N-\alpha)(1+\beta)}{2(2-\alpha)}+\beta-\delta_0}
		\int_{{\mathbb R}^N} e^{2\psi} |u|^pu \,dx \,d\tau, 
\end{align}
where we used, by \eqref{A10},
\begin{align*}
	\int_{{\mathbb R}^N} e^{2\psi} u u_t \,dx
	&\le \delta_6 \int_{{\mathbb R}^N} e^{2\psi} c(\tau,x)u^2 \,dx
		+ (t_0+\tau)\cdot \frac{C(\delta_0)}{(t_0+\tau)c(\tau,x)}\int_{{\mathbb R}^N} e^{2\psi} u_t^2\,dx \\
	&\le \delta_6 \int_{{\mathbb R}^N} e^{2\psi} c(\tau,x)u^2 \,dx
		+ (t_0+\tau)\cdot \frac{C(\delta_6)}{a_0t_0^{1-\beta_+}} \int_{{\mathbb R}^N} e^{2\psi} u_t^2 \,dx. 
\end{align*}
Now, we add \eqref{A11} to
$\nu \cdot$\eqref{A12} ($0<\nu \ll 1$)
and cover the bad terms. In fact, fix $\nu$ small as
$\nu \cdot \{ \frac{(N-\alpha)(1+\beta)}{2(2-\alpha)}+(1+\beta)-\delta_0\} \le \frac{\delta_5}{2}$,
and then take $t_0$ or $a_0$ large in case of (i) or (ii) as
\begin{align*}
	1+\frac{C(\delta_6)}{a_0t_0^{1-\beta_+}} \le \frac{\nu}{16}(t_0+\tau)c(\tau,x)
	\quad \mbox{and} \quad \frac{8}{\delta}\cdot \frac{1}{c(\tau,x)} \le \frac{\nu}{2}(t_0+\tau),
\end{align*}
that is, by \eqref{A10},
\begin{align*}
	1+\frac{C(\delta_6)}{a_0t_0^{1-\beta_+}}
		&\le \frac{\nu}{16}a_0t_0^{1-\beta_+} \le \frac{\nu}{16} (t_0+\tau) c(\tau,x)
	\quad \mbox{and} \quad \frac{8}{\delta} \le \frac{\nu}{2}a_0t_0^{1-\beta_+}
		\le \frac{\nu}{2}(t_0+\tau)c(\tau,x).
\end{align*}
Thus, the bad terms in the left hand sides in
\eqref{A11}--\eqref{A12} are absorbed to good ones and the following desired inequality holds:
\begin{align*}
	&\nu (t_0+t)^{\frac{(N-\alpha)(1+\beta)}{2(2-\alpha)}+(1+\beta)-\delta_0}
		\int_{{\mathbb R}^N} \frac{e^{2\psi}}{2} (u_t^2+|\nabla u|^2)\,dx \\
	&\quad + (t_0+t)^{\frac{(N-\alpha)(1+\beta)}{2(2-\alpha)}+\beta-\delta_0}
		\int_{{\mathbb R}^N} e^{2\psi} (u u_t + \frac{c(t,x)}{2}u^2) \,dx \\
	&\quad +\nu \int_0^t (t_0+\tau)^{\frac{(N-\alpha)(1+\beta)}{2(2-\alpha)}+(1+\beta)-\delta_0}
		\int_{{\mathbb R}^N} e^{2\psi}
			\left\{ (\frac{1}{16}c(\tau,x)-\frac12 \psi_t)u_t^2 + \frac{-\psi_t}{5}|\nabla u|^2 \right\} \,dx \,d\tau  \\
	&\quad + \int_0^t (t_0+\tau)^{\frac{(N-\alpha)(1+\beta)}{2(2-\alpha)}+\beta-\delta_0} 
		\int_{{\mathbb R}^N} e^{2\psi}
			\left( \frac{\delta_6}{2}|\nabla u|^2 + \frac{\delta}{8}(-\psi_t)c(\tau,x) u^2 \right) \,dx \,d\tau  \\
	&\le C(t_0)\int_{{\mathbb R}^N} e^{2\psi(0,x)}
			\left( u_1^2+|\nabla u_0|^2 +|u_0|^{p+1} + \frac{c(0,x)}{2}u_0^2 \right) \,dx  \\
	&\quad +\nu (t_0+t)^{\frac{(N-\alpha)(1+\beta)}{2(2-\alpha)}+(1+\beta)-\delta_0} 
		\int_{{\mathbf R}^N} e^{2\psi} |F(u)|\,dx \\
	&\quad +C(t_0) \int_0^t (t_0+\tau)^{\frac{(N-\alpha)(1+\beta)}{2(2-\alpha)}+\beta-\delta_0} 
	\int_{{\mathbb R}^N} e^{2\psi} |u|^{p+1}\,dx \,d\tau \\
	&\quad +2\nu \int_0^t (t_0+\tau)^{\frac{(N-\alpha)(1+\beta)}{2(2-\alpha)}+(1+\beta)-\delta_0}
	\int_{{\mathbb R}^N} e^{2\psi} (-\psi_t)|F(u)|\,dx \,d\tau. 
\end{align*}
For the semilinear terms, we estimate, for an example, 
\begin{align*}
	N_1 &:= \nu (t_0+t)^{\frac{(N-\alpha)(1+\beta)}{2(2-\alpha)}+(1+\beta)-\delta_0}
			\int_{{\mathbb R}^N} e^{2\psi} |F(u)|\,dx  \\
		&= \nu (t_0+t)^{\frac{(N-\alpha)(1+\beta)}{2(2-\alpha)}+(1+\beta)-\delta_0}
			\Vert  e^{\frac{2\psi}{p+1}} u \Vert_{L^{p+1}}^{p+1}. 
\end{align*}
By the Gagliardo-Nirenberg inequality
\begin{align*}
	\Vert e^{\frac{2\psi}{p+1}} u \Vert_{L^{p+1}}
		&\le C \Vert \nabla (e^{\frac{2\psi}{p+1}} u) \Vert_{L^2}^{\theta} 
			\Vert e^{\frac{2\psi}{p+1}} u \Vert_{L^2}^{1-\theta},
				\quad \mbox{where} \quad \theta= \frac{N(p-1)}{2(p+1)}, \\
		&\le C(\Vert e^{\frac{2\psi}{p+1}}
			|\nabla \psi| u \Vert_{L^2} + \Vert e^{\frac{2\psi}{p+1}} \nabla u \Vert_{L^2})^{\theta}
				\Vert e^{\frac{2\psi}{p+1}} u \Vert_{L^2}^{1-\theta}.
\end{align*}
Since
$\psi=\psi^{(\beta)}$ has the $t$-dependent coefficient $\frac{\mu}{B(t)}$,
instead of $\frac{\mu}{1+t}$ for $\psi=\psi^{(0)}$ in Section 2,
by the Caffarelli--Korn--Nirenberg inequality,
samely as in Lemmas \ref{2_lem_int}--\ref{2_lem_int2}, we have
\begin{align*}
	&\Vert e^{\frac{2\psi}{p+1}} u \Vert_{L^{p+1}} \\
	&\le C(B(t)^{-\frac12} \Vert e^{\psi}\sqrt{a}u \Vert_{L^2}
		+ \Vert e^{\psi}\nabla u \Vert_{L^2})^{\theta + (1-\frac{1}{2^k})(1-\theta)}
			\times B(t)^{\frac{2^k}{2-\alpha}-\frac12} \Vert e^{\psi} \sqrt{a}u \Vert_{L^2}^{\frac{1}{2^k}(1-\theta)}.
\end{align*}
Hence
\begin{align*}
	N_1
	&\le C(t_0+t)^{\frac{(N-\alpha)(1+\beta)}{2(2-\alpha)}+(1+\beta)-\delta_0}
		\times
		\left[(t_0+t)^{%
			-\frac12 (\frac{(N-\alpha)(1+\beta)}{2(2-\alpha)}+(1+\beta)-\delta_0)(\theta +(1-\frac{1}{2^k})(1-\theta))}
		\right. \\
	&\qquad \left. \cdot (t_0+t)^{%
			\{ (1+\beta)(\frac{2^k}{2-\alpha}-\frac12 -\frac12 (\frac{(N-\alpha)(1+\beta)}{2(2-\alpha)}-\delta_0)\}%
			\frac{1}{2^k}(1-\theta)}\right]^{p+1}
			\times M(t)^{\frac{p+1}{2}}.
\end{align*}
The exponent of $(t_0+t)$ is
\begin{align*}
	&(1+\beta) \left[ \frac{N-\alpha}{2-\alpha}+1-\frac{\delta_0}{1+\beta}
		+ (p+1) \left\{ -\frac12 \left( \frac{N-\alpha}{2-\alpha}+1-\frac{\delta_0}{1+\beta} \right)
		\left( \theta+ \left( 1-\frac{1}{2^k}\right) (1-\theta) \right) \right.\right. \\
	&\quad \left.\left. + \left( \left(
		\frac{2^k}{2-\alpha}-\frac12 \right) -\frac12 \left(\frac{N-\alpha}{2-\alpha}-\frac{\delta_0}{1+\beta}
		\right) \right)
		\frac{1-\theta}{2^k} \right\} \right].  
\end{align*}
Samely as \eqref{2_p}, this is negative if and only if
\begin{align*}
	p>1+ \frac{2}{N-\alpha - (2-\alpha)\delta_0/2(1+\beta)},
\end{align*}
because of 
$1+\beta>0$.
The other semilinear terms are estimated in a similar fashion to the above.
Thus, taking $\delta_0>0$ small, we obtain the desired estimate on $M(t) = M^{(\beta)}(t)$ if
$p>1+\frac{2}{N-\alpha}$, which completes
Proposition \ref{prop_app2} for $\beta \ne 0$.

\section*{Acknowledgement}
This work was supported by JSPS KAKENHI Grant Number JP18K134450 and JP16K17625.

\end{document}